\let\oldbibliography\thebibliography
\renewcommand{\thebibliography}[1]{\oldbibliography{#1}
\setlength{\itemsep}{0pt}}
\def\mE{\mathcal{E}}
\def\wA{ \widetilde{A}}  
\def\wB{\widetilde{B}}   
\def\hatA{ \widehat{A}}  
\def\hatB{\widehat{B}}   
\def\bv{\boldsymbol{v}}
\def\bF{\boldsymbol{F}}
\def\bxi{\boldsymbol{\xi}}
\newcommand{\argmin}[1]{\underset{#1}{\operatorname{arg}\operatorname{min}}\;}
\def\R{\mathbb{R}}                                      \def\E{\mathbb{E}}
\newcommand{\innerp}[1]{\langle{#1}\rangle}
\newcommand{\floor}[1]{\lfloor{#1}\rfloor}
\newenvironment{proof}[1][Proof]{\noindent\textbf{#1.} }{\ \rule{0.5em}{0.5em}}
\numberwithin{equation}{section}
\newtheorem{remark}{Remark}[section]
\newtheorem{theorem}{Theorem}[section]
\def\PP{{{\rm l}\kern - .15em {\rm P} }}
\def\PN2{{\PP_{N}-\PP_{N-2}}}
\newcommand{\bphi}{\boldsymbol{\varphi}}
\newcommand{\ba}{\boldsymbol{a}}
\newcommand{\bA}{\boldsymbol{A}}
\newcommand{\bb}{\boldsymbol{b}}
\newcommand{\bc}{\boldsymbol{c}}
\newcommand{\bff}{\boldsymbol{f}}
\newcommand{\bG}{{\boldsymbol G}}
\newcommand{\bu}{\boldsymbol{u}}
\newcommand{\bX}{\boldsymbol{X}}
\newcommand{\deleted}[1]{{}}
\title{Stochastic Data-Driven Variational Multiscale \\ Reduced Order Models}
\author[1]{Fei Lu}
\author[2]{Changhong Mou}
\author[3]{Honghu Liu}
\author[4]{Traian Iliescu}
\affil[1]{\footnotesize  Department of Mathematics, Johns Hopkins University, Baltimore, MD 21218, USA (\href{feilu@math.jhu.edu}{feilu@math.jhu.edu})}
\affil[2]{\footnotesize Department of Mathematics, University of Wisconsin-Madison, Madison, WI 53705, USA (\href{cmou3@wisc.edu}{cmou3@wisc.edu})}
\affil[3]{\footnotesize  Department of Mathematics, Virginia Tech, Blacksburg, VA 24061, USA (\href{hhliu@vt.edu}{hhliu@vt.edu})}
\affil[4]{\footnotesize  Department of Mathematics, Virginia Tech, Blacksburg, VA 24061, USA (\href{iliescu@vt.edu}{iliescu@vt.edu})}
 \date{\today}
\begin{document}
\maketitle

\begin{abstract}

 Trajectory-wise data-driven reduced order models (ROMs) tend to be sensitive to training data, and thus lack robustness. We propose to construct a robust stochastic ROM closure (S-ROM) from data consisting of multiple trajectories from random initial conditions. The S-ROM is a low-dimensional time series model for the coefficients of the dominating proper orthogonal decomposition (POD) modes inferred from data. Thus, it achieves reduction both space and time, leading to simulations orders of magnitude faster than the full order model. 
 We show that both the estimated POD modes and parameters in the S-ROM converge when the number of trajectories increases. Thus, the S-ROM is 
 robust when the training data size increases.  
 We demonstrate the S-ROM on a 1D Burgers equation with a viscosity $\nu= 0.002$ and with random initial conditions. The numerical results verify the convergence. Furthermore, the S-ROM makes accurate trajectory-wise predictions from new initial conditions and with a prediction time far beyond the training range, and it quantifies the spread of uncertainties due to the unresolved scales.
\end{abstract}

\tableofcontents

\section{Introduction} 

Data-driven methods, combined with structures in physical systems, provide effective tools for the construction of closure reduced order models (ROMs) \cite{mou2020dd-vms-rom,Lu20Burgers,MH13,MK02} that bring down computational cost by orders of magnitude from the full order models (FOM). The data are a single or multiple solutions  of a FOM 
\begin{equation}\label{eq:FOM-Data}
\dot \bu  = \bff(\bu),\quad \Rightarrow  \quad  \{\bu(t)^{(m)}, t\in [0,T]\}_{m=1}^M,
\end{equation}
where $\bu$ can be either a high-dimensional state vector for a system of stochastic/ordinary differential equations or a solution to a partial differential equation.  The closure ROMs are low-dimensional models that are constructed from data. They approximate the FOM solution $\bu$ by $\bu(t,x) \approx \sum_{i=1}^r a_i(t)\bphi_i(x)$, where both the basis functions  $\{\bphi_1,\ldots, \bphi_r\}$ and the ROM closure for the coefficient vector $\ba = (a_1,\ldots,a_r)$: 
\begin{equation}\label{eq:ROM-closure}
    \dot \ba = \bF(\ba) + \text{ Closure}(\ba),
\end{equation}
are estimated from data. Here the term $\bF$ often comes from a truncated representation of $\bff$ in the FOM, and the term $\text{Closure}(\ba)$ accounts for the effects of the unresolved modes $\{\bphi_{r+1},\ldots, \bphi_n,\ldots\}$.

The closure problem is ubiquitous in dimension-reduced modeling of high- or infinite-dimensional nonlinear dynamics. The task is to account for the unresolved scales crucial for the dynamics, such as the sub-grid scales in the classical numerical discretization of turbulent flows (e.g., finite element or finite volume methods). There are hundreds of closure models in classical CFD, e.g., large eddy simulation (LES) \cite{BG02,BIL05,DN07,berselli2011horizontal}. These LES closure models are built around the physical insight stemming from Kolmogorov's statistical theory of turbulence. Unfortunately, this physical insight is generally posed in the Fourier series setting, thus not directly applicable to ROMs whose basis functions are estimated from data. Therefore, these ROM closure models are deprived of many tools that represent the core of most LES closure models.

The combination of physical insights and data provides a promising direction for ROM closure modeling. It is poised to extend the reach of physical insights, reduce the size of training data, and produce robust predictive closure models. This is an extremely active research area and many important research have  demonstrated the power of this combination (see e.g., \cite{aubryDynamicsCoherent1988,sirisupSpectralViscosity2004,willcoxUnsteadyFlow2006,wangProperOrthogonal2012,san2014proper,iliescu2018regularized,xie2018data,giere2015supg,mohebujjaman2019physically,carlbergEfficientNonlinear2011,gunzburger2017ensemble,choi2019space,CLW15_vol2}  for a biased sample). In particular, deterministic ROM closure models have been constructed with proper orthogonal decomposition (POD) basis functions, Galerkin projections and least squares closures (see e.g., \cite{carlbergEfficientNonlinear2011,mou2020dd-vms-rom}). The resulting ROM closures accurately fit the data, but they tend to have limited generalizability beyond the training data set and do not represent the uncertainty due to the unresolved scales. 
Stochastic ROM closure models have been constructed from multi-trajectory data via statistical inference \cite{CL15,LLC17,Lu20Burgers,LinLu21}. While they quantify the spread of uncertainty, they are only for pre-selected large scale variables. 

We propose to construct data-driven stochastic ROM (S-ROM) closure for deterministic systems from multi-trajectory data with random initial conditions, so that the S-ROM can make accurate predictions beyond training datasets and quantify the spread of uncertainty. The construction consists of two steps. First, we identify the dominating modes (basis functions) from the data by POD. Then, we infer a stochastic closure model for the evolution of the coefficients of the dominating modes. The S-ROM approximates the discrete-time flow map of these modes, and its parameter is estimated by maximizing the likelihood of the data. It is a natural probabilistic extension of the variational approach from deterministic ROMs \cite{mou2020dd-vms-rom,mou2020data} to stochastic models, and an extension of the stochastic closure models  in \cite{CL15,LLC17,Lu20Burgers} from pre-selected basis to data-deduced basis.    

This study focuses on systems with quadratic nonlinearity and considers S-ROMs depending linearly on the parameters. As a result, we can efficiently construct the S-ROM by least squares (with regularization when the normal matrix is ill-conditioned). Furthermore, we prove that the estimators of the POD basis and the parameters converge at the rate $M^{-1/2}$, where $M$ denotes the number of independent trajectories. 

We demonstrate this new approach on a 1D Burgers equation with random initial conditions. Numerical results verify the convergence of the estimators of the POD basis and the parameters of the S-ROM. In contrast, the single-trajectory estimator varies largely between trajectories.   Furthermore, the S-ROM makes accurate predictions from new random initial conditions for a longer time than the training interval, and it quantifies the spread of the uncertainty. In addition, the S-ROM is adaptive to time-step size, and it can tolerate a relatively large time-step size because it accounts for the discretization error by utilizing the information from data.  

This study provides a parametric inference examination
of the success of the combination of physical insights and data. 
As a data-driven approach, this study 
makes a new contribution to 
the recent advances on utilizing statistical learning and machine learning tools to construct ROM closure models, ranging from Mori-Zwanzig formalism \cite{CHK02,chorin2003conditional,stinisRenormalizedMoriZwanzigreduced2015,zhu2018estimation},  non-Markovian modeling \cite{KCG15,CL15,leiDatadrivenParameterization2016,LinLu21}, filtering and model error quantification \cite{GHM10,MH13,Har16}, and conditional Gaussian models \cite{chen2018conditional,chen2022conditional}, to machine learning methods \cite{chen2022physics,parishParadigmDatadriven2016,ma2018model,san2019artificial,harlim2020machine,levine2021framework,feng2022auto}, to name just a few.  
Furthermore, it can be viewed as an approximation of the flow operator of the full order model, in the context of operator inference by machine learning methods \cite{klus2018data}.  

Our main contributions are:  \vspace{-2mm}
\begin{enumerate}[leftmargin=*]\setlength\itemsep{-1mm}
\item We examine a parametric inference approach, which combines physical insights with data, to construct stochastic ROM closures from multi-trajectory data. The parameters are efficiently estimated by regression with regularization. We prove and numerically verify the convergence of the POD modes and the parameter estimators as the number of data trajectories increases. 
\item  We show that the training trajectories can be short trajectories for autonomous systems, and the constructed S-ROM can make predictions far beyond the training time interval. Furthermore, the S-ROM can represent the spread of the uncertainty due to unresolved scales, making it suitable for data assimilation tasks. 
\item We demonstrate that the S-ROM is adaptive to both resolution and time step size. Thus, it leads to reduction in both space and time, achieving efficient simulations that can be orders of magnitudes faster than the FOM. 
\end{enumerate}

The rest of the paper is organized as follows:  
we introduce the method for the construction of the S-ROM in Section \ref{sect_framework}. Section \ref{sec:Burgers} investigates S-ROM in the context of a Burgers equation, systematically examining the convergence of estimators and the performance of the S-ROM in trajectory-wise prediction and ensemble prediction. Conclusions and final remarks are
presented in Section \ref{sec:conclusion}.

\section{Stochastic DD-VMS-ROM} \label{sect_framework}

In this section, we introduce the method for the construction of a stochastic data-driven variational multiscale reduced order model (DD-VMS-ROM). It leads to a robust stochastic ROM (S-ROM) that quantifies the uncertainty from the unresolved scales and the randomness from the initial condition. We construct the S-ROM from multiple-trajectory data in two stages: (1) construct the basis functions (orthogonal modes) from the data by proper orthogonal decomposition (POD); (2) infer the parameters in the S-ROM, which is a time series model describing the dynamics of the coefficients of the basis functions, by maximizing the likelihood of the multi-trajectory data.  
 Thus, the S-ROM builds on data-deduced basis functions, a parametric structure derived from the full model, and a parametric inference. It is a natural probabilistic extension of the variational approach from deterministic ROMs \cite{mou2020dd-vms-rom,mou2020data} to stochastic models, and an extension of the stochastic closure models  in \cite{CL15,LLC17,Lu20Burgers} from pre-selected spectral basis to data-deduced basis.

\subsection{Variational multiscale methods and flow map approximation} \label{sec:Previous}

\paragraph{Variational multiscale methods.} The variational multiscale (VMS) methods \cite{hughes1998variational,hughes2000large,hughes2001large}
 increase the accuracy of classical Galerkin approximation by introducing hierarchical spaces and coupling terms to model the effects of the unresolved scales. To illustrate the VMS methodology, suppose that the FOM in \eqref{eq:FOM-Data} is a nonlinear PDE and consider its variational (weak) form
\begin{equation}\label{eq:VMS_full}
\innerp{\dot \bu, \bv} = \innerp{\bff(\bu), \mathbf{v}}, \quad \bv \in \bX,
\end{equation}
where $\bX$ is an appropriate Hilbert space and $\innerp{\cdot,\cdot}$ denotes its inner product. The VMS projects the full model onto subspaces $\bX_1,\bX_2, \bX_3,\ldots$, which build a sequence of hierarchical spaces of increasing resolutions $\bX_1, \bX_1\oplus\bX_2,\bX_1\oplus\bX_2\oplus\bX_3,\ldots$. The projection leads to separate equations on each space, and the goal is to solve for the components of $\bu$ that live in the space $\bX_1$ (which represents the resolved scales), denoted by $\bu_1$, whose equation is a low-dimensional system
\begin{equation}\label{eq:u1FOM}
\innerp{\dot \bu_1, \bv_1}  = \innerp{\dot \bu, \bv_1} = \innerp{\bff(\bu), \bv_1} = \innerp{\bff(\bu_1), \bv_1} +  \innerp{\bff(\bu)-\bff(\bu_1), \bv_1} , \quad \forall \bv_1 \in \bX_1.
\end{equation}
The above system is not closed since, due to the nonlinearity of $\bff(\bu)$, the term $\innerp{\bff(\bu)-\bff(\bu_1), \bv_1}$ involves components that do not live in $\bX_1$. Thus, a VMS closure model of $\bu_1$ aims to approximate  $\innerp{\bff(\bu)-\bff(\bu_1), \bv_1}$ in terms of $\bu_1$.   

To construct the VMS closure, we identify a closed dynamical system to describe the dynamics of the coefficients of $\bu_1$ in an orthonormal basis of $\bX_1$. More precisely, suppose that $\bX$ has a complete orthonormal basis  $\{\bphi_1,\ldots,\bphi_r, \ldots\}$ and $\bX_1 = \mathrm{span} \{\bphi_i\}_{i=1}^r$, and write the solution as $\bu(t,x)= \sum_{i=1}^\infty a_i(t)\bphi_i(x) $  and $\bu_1(t,x)= \sum_{i=1}^r a_i(t)\bphi_i(x) $.  Denote $\ba = (a_1,a_2, \ldots,a_r)$ and $\bb= (a_{r+1},\ldots, a_{r+n},\ldots)$, where $\ba_i = \innerp{\dot \bu, \bphi_i} $ for $i\geq 1$.  Then, Eq.~\eqref{eq:u1FOM} is equivalent to the differential system 
\begin{equation}\label{eq:u1FOM_v2}
\dot \ba  =\bF(\ba) +  \bG(\ba,\bb),   
\end{equation}
which is a differential system on $\R^r$ with  
\begin{equation}
\bF(\ba)_i = \innerp{\bff(\bu_1), \bphi_i}, \qquad \bG(\ba,\bb)_i =\innerp{\bff(\bu), \bphi_i} - \innerp{\bff(\bu_1), \bphi_i},  \qquad i=1,\ldots,r. 
\end{equation}
Thus, the task of a closure problem is to approximate the term $\bG(\ba,\bb)$ by either a function of $\ba$ or a functional of the path of $\ba$. Similarly, a hierarchical multiscale closure model describes the dynamics of the coefficients of the $\bu$ in hierarchical spaces.

There are two main challenges: (1) the hierarchical spaces, i.e., their basis functions, can be difficult to construct in numerical methods with non-orthogonal basis functions (e.g., finite element or finite volume methods); and (2) numerical closure models, which account for the effects of the unresolved scales (i.e., the term $\bG(\ba,\bb)$), are difficult to construct from the first principles.

These challenges promote recent developments that combine data and physical insights in the construction of ROM closure \cite{Zwa01,CH13,CKK98,KCG15,GHM10,MH13}. In particular, the data-driven variational multiscale reduced order model (DD-VMS-ROM) in \cite{xie2018data,mou2020dd-vms-rom} constructs the basis functions from data by POD and identifies the closure method through regression, leading to an accurate deterministic model for each trajectory. Meanwhile, stochastic closure models for the coefficients of prescribed spectral-Galerkin basis have been constructed in \cite{LLC17,LinLu21,Lu20Burgers} from data consisting of multiple trajectories by statistical inference methods, leading to robust predictive models. In the following, we briefly review these two approaches, highlighting their connections that will foster a combination of their strengths to construct the stochastic DD-VMS-ROM in the next section. 

\paragraph{DD-VMS-ROM.}
The DD-VMS-ROM method introduced in \cite{xie2018data,mou2020dd-vms-rom} constructs a ROM closure that fits data consisting of a single-trajectory: it first constructs basis functions from data by POD, then it estimates the parameters in the closure model. 
For example, when $f(\bu)$ in \eqref{eq:VMS_full} consists of only linear and quadratic terms as typically encountered in fluid flow problems, the two-scale 2S-DD-VMS-ROM~\cite{xie2018data} is
\begin{align} \label{eq:2S-ROM}  
\dot \ba
= (A + \wA)\ba 
+ \ba^\top (B + \wB) \ba, 
\end{align}
where $A$ and $\wA$ are $r\times r$ matrices, while $B$ and $\wB$ are $r\times r \times r$ tensors. 
The term $A \ba + \ba^\top B \ba$ corresponds to $\bF(\ba)$ in \eqref{eq:u1FOM_v2}, which represents the Galerkin projection of $\bff(\bu)$ onto the subspace $\bX_1$. 
The additional term $\wA \ba +  \ba^\top \wB \ba$ aims to approximate the model error $\bG(\ba,\bb)$ in \eqref{eq:u1FOM_v2} by quadratic polynomials of the resolved scales, and the entries in $\wA$ and $\wB$ are estimated from data, consisting of time snapshots in a single solution. The above two-scale model has also been extended to three-scale 3S-DD-VMS-ROM~\cite{mou2020dd-vms-rom} to include finer interactions in hierarchical spaces. 

\paragraph{Inference-based closures.}
The stochastic model reduction method \cite{LLC17,LinLu21,Lu20Burgers} constructs ROM closures by inferring a time series model that approximates the \emph{discrete-time flow map} of the resolved scales. The discrete-time flow map of $\ba$ is obtained by integrating \eqref{eq:u1FOM_v2} on an time interval $[t_{n-1},t_n]$,  
\begin{align} \label{eq:flowmap}
\ba(t_n) & = \ba(t_{n-1}) + \int_{t_{n-1}}^{t_n} \left[ \bF(\ba(s)) +  \bG(\ba(s),\bb(s))\right] ds \notag
\\ & = \mathcal{F}(\ba(t_{n-1}), \bb(t_{n-1})) 
 \approx F_\theta(\ba(t_{n-p:n-1}), \bxi_{n-p:n-1}),
\end{align}
where the flow map $\mathcal{F}(\ba(t_{n-1}), \bb(t_{n-1})) $ depends on the unresolved scale $\bb(t_{n-1})$. It is approximated by a functional $F_\theta$ depending on the past of $\ba$, denoted by a vector with time lags $\ba(t_{n-p:n-1})=\left(\ba(t_{n-p}),\ldots, \ba(t_{n-1})\right)$,  since $\bb(t_{n-1})$ depends on the history of $\ba$. Here $F_\theta$ is a parametric function with parameter $\theta$ estimated from data, and $\bxi_n$ is assumed to be sequence of independent identically distributed (IID) Gaussian noise to quantify the uncertainty. In particular, utilizing the quadratic terms, a parametric form of $F_\theta$ is shown to be effective for a stochastic Burgers equation \cite{Lu20Burgers}:  
\begin{align}
F_\theta(\ba(t_{n-p:n-1}), \bxi_{n-p:n-1}) \approx c_1\ba(t_{n-1}) + c_R R^\delta(\ba) +c^w G(\ba)  + \bxi_{n-1},
\end{align}
 where the parameters $(c_1,c_R,c^w)$ are diagonal matrices, $R^{\delta} $ comes from the RK4 (the fourth-order Runge-Kutta) integrator of the $r$-mode truncated system, and 
 $G(\ba)$ denotes a vector with entries 
 \begin{equation}  \label{eq:ks-ansatz-d}
G(\ba)_k =  \sum_{l \in \mathcal{A}_{k,r}} {\widetilde \ba^{n-1}_l \widetilde \ba^{n-j}_{k-l}} 
 \,  \text{ with } \widetilde{\ba}^{n-j}_{k}=
    \left\{
    \begin{array}{ll}
      \ba^{n-j}_k~, & 1\leq k\leq r; \\[1ex]
     \sum_{|l|\leq r, |k-l|\leq r}\ba^{n-j}_{k-l} \ba^{n-j}_{l} , & r < k \leq 2r,
    \end{array}
    \right.
  \end{equation}
  where $ \mathcal{A}_{k,r} = \{l: |k-l|\leq r, r< |l| \leq 2r  \text{ or }  |l|\leq r, r< |k-l| \leq 2r \}$ and $ \ba^{m}_l =\ba_l(t_{m})$ for each $l,m$. 
 The time series model, in the form of a nonlinear autoregression moving average model (NARMA), is inferred from multiple-trajectory data. It takes into account both the model error and the numerical discretization error. Thus, it can reproduce the statistics and the dynamics of the resolved scales. In particular, it can be used for ensemble prediction with uncertainty quantified \cite{Lu20Burgers,chen2022conditional}. 

Note that when $\bff(\bu)$ consists of quadratic nonlinearities, the structure of the DD-VMS-ROM in~\cite{mou2020dd-vms-rom} is similar to the NARMA model in \cite{Lu20Burgers}. The DD-VMS-ROM allows for fine interactions between the resolved scales through the $\wB$ matrix. In NARMA closure, these interactions are represented in a bundle $G(\ba)$ of polynomials of degree up to four, and this bundle has fewer parameters, allowing for time lags to represent the memory effects while avoiding overfitting.

\subsection{Stochastic DD-VMS-ROM by inference}\label{sec:S-ROM_framework}
Combining the strengths of both the DD-VMS-ROM and the inference-based approaches, we introduce a method to construct robust data-driven stochastic ROM (S-ROM) by inference. 

The premise is that the FOM \eqref{eq:FOM-Data} is subject to random initial conditions, which are sampled from a given probability measure $\mu$: 
\begin{equation}\label{eq:FOM_random_IC}
\begin{aligned} 
 \dot \bu= \bff (\bu), \quad \bu(0) = \bu_0 \sim \mu. 
\end{aligned}%
\end{equation}
While the method presented below is applicable to general nonlinear systems, to fix ideas, we assume that the nonlinear term  $\bff(\bu)$ is quadratic. For a given reduced dimension $r$, the S-ROM consists of a quadratic drift part and an additive noise term. Besides the Galerkin projection of $\bff(\bu)$, the drift part involves additional linear and bilinear terms whose coefficients are optimized based on multiple-trajectory FOM data.

More precisely, the proposed S-ROM fits the FOM data to the time series model 
\begin{equation}\label{eq:sROM}
 \ba(t_{l+1}) - \ba(t_l) \approx \left[(A + \wA)\ba + \ba^\top (B + \wB) \ba\right] \delta + \sqrt{\delta }\Sigma \bxi_l, \quad l=1,\ldots, n_l,
\end{equation}
where $\ba$ is $\mathbb{R}^r$-valued for a chosen reduced dimension $r$, the noise term $\{\bxi_l\}$ is an IID sequence of Gaussian random variables with distribution $N(0,Id)$, and $\{t_l=l\delta\}_{l=1}^{n_l}$ are the time instances with a time-step $\delta$ which can be relatively large (see Section \ref{sec:spaceTimeReduction} for an exploration). 
The parameters $\wA, \wB$ and $\Sigma$ are estimated by maximizing the likelihood of data consisting of multiple trajectories. We assume $\Sigma$ to be diagonal for computational efficiency. This model approximates the discrete-time flow map of the process $\ba$  in \eqref{eq:flowmap}. It is similar to an Euler-Maruyama discretization of the stochastic differential equation 
\begin{align}\label{2S-ROM-stoc}
\dot \ba 
= (A + \wA)\ba + \ba^\top (B + \wB) \ba + \Sigma \dot{\mathbf{W}}_t, 
\end{align}
where $\dot{\mathbf{W}}_t$ is white noise. 
This equation is a stochastic version of the 2S-DD-VMS-ROM in \eqref{eq:2S-ROM}.

The above S-ROM differs from the 2S-DD-VMS-ROM in three aspects: (i) its parameters $\wA, \wB$ fit multiple-trajectories instead of a single trajectory; (ii) it has an additional noise term to account for the uncertainty in the residual; (iii) it is a discrete-time model instead of a differential system and its parameters are time-step adaptive.  
As a result, the stochastic ROM can make predictions for new initial conditions sampled from the distribution and it is suitable for ensemble predictions with uncertainty quantification. In other words, it generates a new stochastic process approximating the original stochastic process of the FOM in distribution. Also, it accounts for the discretization error and can tolerate a larger time-step size. 

The construction of the stochastic ROM consists of three steps: data generation, extraction of basis functions and inference of the parameters from data. First, we generate multiple-trajectory data by the FOM, which resolves the system with high-resolution in space and time, with initial conditions sampled from $\mu$. Second, we construct basis functions from data, leading to a data-adaptive selection of hierarchical spaces. Third, we infer the parameters in the time series model \eqref{eq:sROM} from multiple trajectory data of $\ba$.

\paragraph{Data generation.} We generate data using the FOM with a time-step $\Delta t$. The data consists of many trajectories with the initial conditions sampled from the initial distribution: 
\begin{align} \label{eq:data}
\text{Data of multiple trajectories:}\quad  \{Y^{(m)} = \bu(t^F_{1:N_t})^{(m)} \in \R^{N_x\times N_t}\}_{m=1}^M, 
\end{align}
where $t^F_{1:N_t}= (t^F_1,\ldots,t^F_{N_t})$, with $t^F_l = l\Delta t$ being the time instances.  Hereafter, $\bu(t_l^F)\in \R^{N_x}$ denotes the FOM solution with $N_x$  space grid points at time $t^F_l$. These fine time instances will be downsampled to infer the S-ROM \eqref{eq:sROM} when a time-step $\delta$, larger than $\Delta t$, is used to construct the S-ROM. 

\paragraph{Extraction of POD basis and solution coefficients.} We construct the basis functions from data via POD and extract the coefficients $\ba$ by projecting the FOM solutions onto the POD basis.  
\begin{enumerate}
    \item (Extract basis functions by ensemble-POD): 
    We get basis functions by computing the eigenvalues and eigenvectors of the square matrix $\overline K_M$: 
\begin{align} \label{eq:Km}
\overline K_M = \frac{1}{M} \sum_{m=1}^M K^{(m)}, \quad \text{ with } K^{(m)} = \frac{1}{N_t} Y^{(m)}(Y^{(m)})^\top \in \R^{N_x \times N_x}. 
\end{align}
Here $K^{(m)}$ can be computed for each trajectory in parallel. We sort the eigenvalues in descending order, denote them by $\lambda_i$ and denote their eigenvectors by $\varphi_i\in \R^{N_x\times 1}$ for $i=1,\ldots,N_x$.  

\item  (Extract trajectory data of the low modes).  Projecting the solution $\bu$ to the POD basis: 
\[
\ba_i^{(m)} =  \varphi_i^\top Y^{(m)} \in \R^{1\times N_t}, \quad, i=1, \ldots, r;\, m= 1,\ldots, M, 
\]
we obtain the coefficients $\{\ba_i(t_l^F)= ( \bu(\cdot,t_l^F),\varphi_i(\cdot), i=1,\ldots, r \}_{l=0}^{N_t}$, which is a multivariate time series encoding the dynamics. 
\end{enumerate}

The above construction of the POD basis is almost the same as the classical approach of principle component analysis, which identifies the basis by the eigen--decomposition of the covariance matrix. The minor difference here is the time averaging step that discards the temporal correlations of the process $Y$, so that we can focus on the dominating modes of the dynamics. It is similar to the ensemble POD in \cite{gunzburger2017ensemble,gunzburger2020leray}, which uses multiple trajectories from systems with different parameters, in the sense that the initial condition can be viewed as a parameter of the system. The eigen--decomposition of $\overline K_M $ can be done directly (e.g., by Cholesky factorization) when its dimension is not too large, 
and more advanced methods (e.g., the nested-POD \cite{kadeethum2021non}) are available for higher dimensions.

\paragraph{Inference of the parameters.} 
 We estimate the parameters by maximizing the likelihood of data for the model \eqref{eq:sROM}. We start from downsampling the data trajectories in time to fit the time-step of the S-ROM. That is, each projected FOM trajectory $\ba(t^F_{1:N_t})$ with $t^F_j = j\Delta t$ is downsampled to $\ba(t_{1:n_t})$ with $t_l = l\delta$. In other words, if $\delta = {\rm Gap} \Delta t$, we make observations every $\rm Gap$ steps from the fine data. Denote the downsampled data by 
 \[
 \textbf{ Data }\{\ba^{(m)}(t_{1:n_t})\}_{m=1}^M, \, t_l = l\delta = l {\rm Gap} \Delta t. 
 \]

Since the S-ROM depends on the parameters linearly, the parameters $(\wA, \wB)$ are estimated by regression, which is equivalent to maximizing the likelihood of the data trajectories:  
 \begin{equation}\label{eq:MLE}
 (\hatA,\hatB)=\argmin{(\wA, \wB)} l(\wA, \wB): = \frac{1}{Mn_t}\sum_{m=1}^M \sum_{l=1}^{n_t}\|   F^{(m)}(t_{l})  - \wA \ba^{(m)}(t_l)  - (\ba^{(m)}(t_l))^\top  \wB \ba^{(m)}(t_l) \|^2, \end{equation} 
 where, with the notation $\Delta \ba(t_l) = \ba(t_{l+1}) - \ba(t_l) $,  the variable $F$ is defined by 
 \begin{align}\label{eq:F^m}
   F^{(m)}(t_{l}) & =  \frac{1}{\delta} \Delta \ba^{(m)}(t_{l}) - (A \ba^{(m)} + \ba^\top B \ba^{(m)})(t_{l}) \in \R^{r\times {1}}.  
 \end{align}
 The diagonal $\Sigma$ is estimated by the residual of the regression. 
 
 Note that $\wB \in \R^{r\times r\times r}$ is a 3D array such that for each $k$, the matrix $( \wB(i,i',k) )_{1\leq i, i'\leq r}$ is symmetric because it is the coefficient matrix of the terms $\ba_i\ba_{i'}$. Thus, including $\wA\in \R^{r\times r}$, we have  $n_r =r+ (r+1)r/2$ parameters to be estimated for each of the $r$ modes. To write in the form of least squares, we denote the estimator by $\bc =(\hatA(i,k)_{1\leq i, k\leq r},( \hatB(i,i',k) )_{1\leq i'\leq i \leq r, 1\leq k\leq r}) \in \R^{n_r\times r}$,  and denote the normal matrix and vector of regression by 
 \begin{equation}\label{eq:Abarbbar}
 \begin{aligned}
\bA^{(m)}(j,j') &= \frac{1}{n_t} \sum_{l=1}^{n_t} \psi_j(\ba^{(m)}(t_l))\psi_{j'}(\ba^{(m)}(t_l)), \,  1\leq j, j'\leq n_r ,\\
 \mathbf{b}^{(m)}(j) & = \frac{1}{n_t\delta} \sum_{l=1}^{n_t} \psi_j(\ba^{(m)}(t_l))F^{(m)}(t_l) \in \R^{r},  \,  1\leq j\leq n_r,
 \end{aligned}
 \end{equation}
 where $(\psi_j(\ba(t_l)), 1\leq j \leq n_r )$ is defined by 
  \begin{equation*}
  \psi_j(\ba(t_l)) = 
   \left\{
    \begin{array}{ll}
      \ba_{j}(t_l), & 1\leq j\leq r; \\[1ex]
      \ba_{i}(t_l)\ba_{i'}(t_l), &  j= i\times r + r' \text{ with }1\leq i'\leq i\leq r. 
          \end{array}
    \right.
  \end{equation*}
  Then, the estimator is solved by 
 \begin{equation}\label{eq:LSE}
 \bc_M = \bA_M^{-1}\mathbf{b}_M, \quad \text{ with }  \bA_M = \frac{1}{M}\sum_{m=1}^M \bA^{(m)},\quad \mathbf{b}_M = \frac{1}{M}\sum_{m=1}^M \mathbf{b}^{(m)}, 
 \end{equation}
where $\bA_M^{-1}$ denote the Moore--Penrose pseudo-inverse when the normal matrix is singular. In practice, when $\bA_M$ is ill-conditioned or singular, a regularization term often helps to lead to a robust estimator (see Section \ref{sec:Reg} for more details).  The residual provides us the estimator for $\Sigma$: 
\begin{equation}\label{eq:Sigma}
\Sigma_M = \mathrm{Diag}(\sigma(1), \ldots, \sigma(r)), \quad \text{ with } \sigma(k)^2 =  \|\bA_M\bc_M(\cdot,k) - \mathbf{b}_M(\cdot,k)\|^2_{\R^{n_r}}.
\end{equation}
  
 The resulted discrete-time S-ROM is designed to account for both the model error and discretization error. It has a consistent estimator (i.e., the estimator converges as the number of trajectories increases (see Theorem \ref{thm:conv} in the next section). Furthermore, it can tolerate a relatively large time step size \cite{CL15,Lu20Burgers,LiLuYe21}, and we will demonstrate it in Section \ref{sec:spaceTimeReduction} for a viscous Burgers equation.

\paragraph{Computational complexity.} At the learning stage, the major computation cost occurs when reading the FOM data twice: one for the construction of the POD basis $\{\varphi_k\}_{k=1}^r$ and the other for the computation of the coefficients $\{\ba(t_{1:n_t}))^{(m)}\}_{m=1}^M$. The rest of the computation uses only these coefficients, whose dimension is  significantly lower than the dimension of the FOM data. We implement all the trajectory-wise computation in parallel,
from the evaluation of $K^{(m)}$ in \eqref{eq:Km} to the estimator in \eqref{eq:LSE}. In particular, during the computation of the estimator, we first compute $\bA^{(m)}$ and $\mathbf{b}^{(m)}$ in \eqref{eq:Abarbbar} for each trajectory in parallel, then we assemble them as in \eqref{eq:LSE}.

\subsection{Convergence of the estimators}\label{sec:conv}
We show that the POD modes (the eigenvectors) and the parameter estimator converge as the number of trajectories increases. Such a convergence follows from the Central Limit Theorem, because the trajectories are independent and identically distributed with the randomness comes from the initial conditions.

The following assumption requires the solution field to be uniformly bounded almost surely, and it holds true for the weak solution of Equation \eqref{eq:VMS_full} with a wide range of nonlinearities and initial distributions, including the Burgers equation with random initial conditions to be studied in the next section. 

\noindent \textbf{Assumption A}: 
Assume that the FOM solution $\bu$ of \eqref{eq:FOM_random_IC}, defined for $t\in [0,T]$ and $x\in D\subset \R^d$ with $D$ being a bounded domain,  satisfies $\|\bu\|_\infty = \sup_{t\in[0,T],x\in D}|u(t,x)| < \infty$ for almost all initial conditions sampled from the initial distribution $\mu$.

The next theorem shows that the POD eigenvectors estimated in the previous section converge as the number of trajectories increases.  
\begin{theorem}[Convergence of POD eigenvectors]\label{thm:POD} 
The POD eigenvectors and eigenvalues converge as $M\to \infty$ under {\rm Assumption A}. More precisely, 
\begin{itemize}
\item Each eigenvalue of the averaged covariance matrix $\overline K_M $ in \eqref{eq:Km},  denoted by $\lambda_k^M$, converges to the corresponding eigenvalue $\lambda_k$ of $\E[\overline K_M ] =\lim_{M\to \infty} \overline K_M$, and $M^{-1/2} (\lambda_k^M-\lambda_k)$ is asymptotically normal. 
\item Each eigenvector of the eigenvalues with multiplicity one,  denoted by $\varphi_k^M$,  converges in $L^2(D)$ almost surely to the corresponding eigenvector $\varphi_k$ of  $\E[\overline K_M ]$, and  $M^{-1/2} (\varphi_k^M-\varphi_k)$ is asymptotically normal.  
\end{itemize}
\end{theorem}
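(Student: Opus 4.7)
The plan is to reduce both claims to the Central Limit Theorem for the matrix $\overline K_M$ and then push the limit through the eigen--decomposition using matrix perturbation theory together with the Delta method.

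\textbf{Step 1: IID structure and moments of $K^{(m)}$.} Under Assumption A, the random fields $\bu^{(m)}$ are IID samples from the law of the FOM initialized at $\bu_0\sim \mu$, so the matrices $K^{(m)} = \frac{1}{N_t} Y^{(m)}(Y^{(m)})^\top$ are IID $N_x\times N_x$ symmetric positive semidefinite random matrices. Almost-sure uniform boundedness of $\bu$ gives a deterministic bound $\|K^{(m)}\|\le C$ a.s., so in particular $K^{(m)}$ has finite moments of all orders. Set $K^\ast := \mathbb{E}[K^{(1)}]$. By the strong law of large numbers applied entrywise, $\overline K_M \to K^\ast$ almost surely, and by the multivariate CLT applied to the $N_x(N_x+1)/2$ independent entries of the symmetric matrix, $\sqrt{M}\,(\overline K_M - K^\ast)$ converges in distribution to a centered Gaussian matrix $\mathcal{G}$ supported on symmetric matrices.

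\textbf{Step 2: Convergence of eigenvalues.} Since $K^\ast$ and $\overline K_M$ are symmetric, Weyl's inequality gives $|\lambda_k^M-\lambda_k|\le \|\overline K_M - K^\ast\|$, which already yields almost-sure convergence at rate $O(M^{-1/2})$. To upgrade this to asymptotic normality at a simple eigenvalue $\lambda_k$, I invoke analytic perturbation theory: for a symmetric matrix close to $K^\ast$, the eigenvalue $\lambda_k(\cdot)$ is a smooth function of the matrix entries in a neighbourhood of $K^\ast$, with first derivative $d\lambda_k = \varphi_k^\top (dK)\varphi_k$. The Delta method, combined with the CLT from Step 1, then gives
\begin{equation*}
\sqrt{M}\,(\lambda_k^M - \lambda_k) \;\Longrightarrow\; \varphi_k^\top \mathcal{G}\,\varphi_k,
\end{equation*}
which is a centered Gaussian scalar, proving asymptotic normality.

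\textbf{Step 3: Convergence of eigenvectors at simple eigenvalues.} When $\lambda_k$ has multiplicity one, the spectral projector $P_k$ onto its eigenline is an analytic function of the matrix in a neighbourhood of $K^\ast$, by the Riesz projector formula $P_k = -\frac{1}{2\pi i}\oint (K-zI)^{-1}\,dz$ on a small contour around $\lambda_k$. After fixing the sign of $\varphi_k^M$ by $\langle \varphi_k^M,\varphi_k\rangle>0$, the Davis--Kahan $\sin\Theta$ theorem gives $\|\varphi_k^M-\varphi_k\|_2 \le C\,\|\overline K_M-K^\ast\|/\mathrm{gap}_k$ with $\mathrm{gap}_k=\min_{j\ne k}|\lambda_j-\lambda_k|>0$, yielding almost-sure $L^2(D)$ convergence. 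Differentiating $P_k$ at $K^\ast$ in the direction $dK$ gives the standard first-order formula
\begin{equation*}
d\varphi_k \;=\; \sum_{j\ne k}\frac{\varphi_j^\top(dK)\varphi_k}{\lambda_k-\lambda_j}\,\varphi_j,
\end{equation*}
so a second application of the Delta method produces a centered Gaussian limit for $\sqrt{M}(\varphi_k^M-\varphi_k)$.

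\textbf{Main obstacle.} The delicate point is the eigenvector claim, because eigenvectors are only defined up to sign and become non-smooth functions of the matrix at eigenvalue crossings. The hypothesis of multiplicity one localises us away from crossings, so the spectral projector approach is clean; the only care needed is a consistent sign convention for $\varphi_k^M$ and the verification that the Davis--Kahan bound uses the population gap $\mathrm{gap}_k$ rather than the empirical one, which is legitimate for $M$ large by Step 2. The transition from the convergence of the symmetric $N_x\times N_x$ matrix $\overline K_M$ (a finite-dimensional object after discretisation) to the claimed $L^2(D)$ convergence is then immediate, since the eigenvectors $\varphi_k^M,\varphi_k$ are represented on the same fixed spatial grid.
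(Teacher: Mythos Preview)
Your proof sketch is correct and follows the standard route for asymptotic PCA: strong law and multivariate CLT for the empirical covariance matrix, followed by matrix perturbation theory (Weyl, Davis--Kahan, Riesz projector) and the Delta method to transfer the limit through the eigen--decomposition. The paper does not actually prove the theorem at all: its entire proof consists of the sentence ``This is a classical result in principal component analysis, see e.g., [Proposition 8--10] in Dauxois--Pousse--Romain (1982).'' So you have supplied precisely the argument the paper defers to the literature, and the method you use is the same one underlying that reference. One small remark: your Step~2 establishes asymptotic normality of $\lambda_k^M$ only at simple eigenvalues (since the Delta method needs differentiability of $\lambda_k(\cdot)$), whereas the theorem as stated does not impose that restriction on the eigenvalue claim; this is a looseness in the paper's statement rather than a defect in your argument, and the cited reference handles the repeated-eigenvalue case with a possibly non-Gaussian limit.
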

\begin{proof} This is a classical result in principle component analysis, see e.g., \cite[Proposition 8-10]{dauxois1982asymptotic}.
\end{proof}

 \begin{remark} 
 When the data is continuous (i.e., not on the spatial grid points), the principal component analysis shows that similar convergence holds true for the eigenvalues and the eigenfunctions of the operators {\rm\cite{dauxois1982asymptotic}}. That is, let $K_M(x,y) =\frac{1}{M} \sum_{m=1}^M\frac{1}{T}\int_0^T  [u(t,x)u(t,y) ]dt$ and define the operator $L_{K_M}: L^2(D)\to L^2(D)$ by $L_{K_M}\varphi (x) = \int_D K_M(x,y)\varphi(y)dy$. Then, the eigenvalues and eigenfunctions of $L_{K_M}$ converge as $M$ increases. 
 \end{remark}

The next theorem shows that the estimator of $(\wA,\wB)$ is asymptotically normal as $M$ increases. 
\begin{theorem}[Convergence of parameter estimator]\label{thm:conv} 
Suppose that Assumption A holds true.  Suppose that 
 the smallest eigenvalue of $\bA_\infty = \lim_{M\to \infty}  \bA_M=  \E[\bA_M] $, the expectation of the random matrix $\bA^{(m)}$ in  \eqref{eq:Abarbbar},  is positive. 
Then, the estimator $ \bc_M $ in \eqref{eq:LSE} converges to $\bc_*=   \bA_\infty^{-1}\mathbf{b}_\infty$, where $\mathbf{b}_\infty=\E[ \mathbf{b}_M]$, almost surely and $M^{-1/2} (\bc_M - \bc_*)$ is asymptotically normal. 
\end{theorem}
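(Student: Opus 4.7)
The plan is to apply the strong law of large numbers and the multivariate central limit theorem to the IID sequence $\{(\bA^{(m)}, \mathbf{b}^{(m)})\}_{m=1}^\infty$ --- these are IID because they are deterministic functionals of the trajectories $\bu^{(m)}$ driven by IID initial conditions sampled from $\mu$ --- and then to push the limits through the smooth map $g(A,b) = A^{-1}b$. Assumption A supplies the needed integrability: since $\|\bu\|_\infty < \infty$ almost surely and the POD modes have unit $L^2$-norm, the projected coordinates $\ba^{(m)}_i(t_l)$ and hence all features $\psi_j(\ba^{(m)}(t_l))$ are uniformly bounded in $m,l$; because the entries of $\bA^{(m)}$ and $\mathbf{b}^{(m)}$ are polynomials of bounded degree in the $\psi_j$ and in finite differences of $\ba^{(m)}$, they are themselves uniformly bounded and hence possess moments of every order.

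For the almost-sure statement, the entrywise SLLN gives $\bA_M \to \bA_\infty$ and $\mathbf{b}_M \to \mathbf{b}_\infty$ a.s. Since $\bA_\infty$ is invertible by hypothesis and matrix inversion is continuous at any invertible matrix, $\bA_M$ is invertible for all sufficiently large $M$ on a full-probability event, and on this event $\bA_M^{-1} \to \bA_\infty^{-1}$. The continuous-mapping theorem then yields $\bc_M = \bA_M^{-1}\mathbf{b}_M \to \bA_\infty^{-1}\mathbf{b}_\infty = \bc_*$ almost surely.

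For the asymptotic normality, I would apply the multivariate CLT to the IID random vectors $(\mathrm{vec}(\bA^{(m)}), \mathbf{b}^{(m)})$ to conclude that $M^{1/2}(\bA_M - \bA_\infty,\ \mathbf{b}_M - \mathbf{b}_\infty)$ converges jointly in distribution to a centered Gaussian with the natural covariance. The delta method applied to $g(A,b) = A^{-1}b$, which is $C^\infty$ on the open set of invertible matrices with Fréchet derivative at $(\bA_\infty, \mathbf{b}_\infty)$ given by $(H, k) \mapsto -\bA_\infty^{-1} H \bA_\infty^{-1}\mathbf{b}_\infty + \bA_\infty^{-1} k$, then delivers the asymptotic normality of $M^{1/2}(\bc_M - \bc_*)$ with an explicit limiting covariance.

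The main technical obstacle is that the coefficients $\ba^{(m)}$ in \eqref{eq:Abarbbar} are obtained by projecting onto the empirical POD basis $\{\varphi_k^M\}_{k=1}^r$, which depends on the entire sample; hence $\bA^{(m)}$ and $\mathbf{b}^{(m)}$ are not literally IID in $m$. I would handle this with a perturbation argument that leans on Theorem \ref{thm:POD}: the empirical eigenvectors differ from their deterministic limits $\varphi_k$ by $O(M^{-1/2})$ in $L^2$, so replacing $\varphi_k^M$ by $\varphi_k$ introduces an $O(M^{-1/2})$ perturbation into each entry of $\bA^{(m)}$ and $\mathbf{b}^{(m)}$ (quantified via the uniform bound from the first paragraph), which averages out and is absorbed into the CLT remainder after an application of Slutsky's theorem. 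Once the basis is frozen at the deterministic limit, $\{(\bA^{(m)}, \mathbf{b}^{(m)})\}_m$ is genuinely IID and the SLLN--CLT--delta-method chain above applies verbatim.
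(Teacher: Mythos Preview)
Your argument follows the same skeleton as the paper's---SLLN for almost-sure consistency, CLT for asymptotic normality---but is more careful on two points. For the normality, the paper applies the CLT only to $\sqrt{M}(\mathbf{b}_M-\mathbf{b}_\infty)$ and then invokes Slutsky via $\bA_M^{-1}\to\bA_\infty^{-1}$; written that way the decomposition $\sqrt{M}(\bc_M-\bc_*)=\bA_M^{-1}\sqrt{M}(\mathbf{b}_M-\mathbf{b}_\infty)+\sqrt{M}(\bA_M^{-1}-\bA_\infty^{-1})\mathbf{b}_\infty$ silently drops the second term, which is $O_p(1)$ rather than $o_p(1)$. Your joint CLT for $(\bA_M,\mathbf{b}_M)$ followed by the delta method at $g(A,b)=A^{-1}b$ handles both contributions simultaneously and delivers the correct limiting covariance. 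Second, you rightly flag that projecting onto the empirical basis $\{\varphi_k^M\}$ makes the pairs $(\bA^{(m)},\mathbf{b}^{(m)})$ depend on the whole sample and hence not literally IID; the paper's proof implicitly treats the basis as fixed and does not address this. Your Slutsky-type perturbation argument---freeze the basis at its limit and use Theorem~\ref{thm:POD} to control the $O(M^{-1/2})$ discrepancy---is the natural fix, though carrying it out cleanly takes some bookkeeping because the basis perturbation enters both the centering and the CLT scale. In short, your route is the paper's route made rigorous.
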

\begin{proof}
Note that by the strong  Law of Large Numbers, $ \bA_M\to \bA_\infty$ and $\mathbf{b}_M\to  \mathbf{b}_\infty$ almost surely as $M\to \infty$. Thus, $ \bA_M^{-1} \to \bA_\infty^{-1}$ almost surely (using the fact that $A^{-1}-B^{-1} = A^{-1}(B-A)B^{-1}$). Then,  $\bc_M =\bA_M^{-1} \mathbf{b}_M \to \bc_*=\bA_\infty^{-1}\mathbf{b}_\infty$ almost surely, i.e. the estimator is consistent. Meanwhile, note that $\sqrt{M}(\mathbf{b}_M -\mathbf{b}_\infty)$ is asymptotically normal by the Central Limit Theorem. Together with the almost sure convergence of $\bA_M^{-1} $, we obtain the asymptotic normality of $M^{-1/2} (\bc_M - \bc_*)$. 
\end{proof}

We conclude this section by noting that the above convergence is in the number of independent trajectories, and it does not take into account of the length of the trajectories. While the length of the trajectories plays a limited role in the convergence in $M$, 
long trajectories help to identify the dominating modes for the longer-term dynamics. Also, the parameter estimator and the POD basis depend on the length of the data (in addition to the initial distribution).

\section{Numerical results for a viscous Burgers equation}
\label{sec:Burgers}

In this section, we perform a numerical investigation of the S-ROM for the one-dimensional viscous Burgers equation.
In Section~\ref{sec:Burgers-setup}, we present the mathematical and computational setups. 
Sections~\ref{sec:POD_estimation}-\ref{sec:convEstNum} examine the convergence of the POD basis and the parameters learnt from data. Section \ref{sec:SROM_performance} investigates the performance of the S-ROM in making predictions and quantifying the uncertainties. Finally, taking advantage of the S-ROM's features of being efficient and adaptive to spatial resolution and time-step, we explore optimal space-time reduction in Section \ref{sec:spaceTimeReduction}. 

\subsection{The one-dimensional viscous Burgers equation and the numerical setup}
\label{sec:Burgers-setup}

As an illustration of the stochastic DD-VMS-ROM framework presented in Section~\ref{sect_framework}, we consider the viscous Burgers equation posed on $(0, 1)$ and supplemented with homogeneous Dirichlet boundary conditions and a random initial condition:
\begin{equation} \label{eq:Burgers}
\begin{aligned} 
& u_{t} = \nu u_{xx} - uu_{x} \,, \quad 0 < x <1, \, t>0, \\
& u(0,t) = u(1,t) =0,   \quad  t \ge 0, \\
& u(\cdot, 0) = u_0(\cdot,\omega)\sim \mu.
\end{aligned}%
\end{equation}
Here  $u_0(\cdot,\omega)\sim \mu$ means that the initial condition is sampled from the measure $\mu$ on $L^2(0,1)$.  The randomness from initial conditions is important for data-driven modeling of the dynamics because it generates data trajectories that can sufficiently explore the dynamics of system, and it has been utilized in \cite{chorin_AveragingRenormalization2003,stinisMoriZwanzigReduced2012,choStatisticalAnalysis2014}. 

In the numerical experiments, we set the viscosity constant to be $\nu = 2\times10^{-3}$, and consider smooth random initial conditions in the form 
  \begin{align}  \label{eq:ic-random} 
  u_0(x,\omega) = \sum_{k=1}^K  \frac{w_k(\omega)}{k} \sin(\pi kx),
  \end{align} 
where we set $K=50$ to allow for fast oscillations in the sampled initial data profiles, and each $w_k$ is a random number sampled from the normal distribution with mean $\mu_u = 0.5$ and standard deviation $\sigma_u = 0.2$. A few typical such random initial conditions are shown in the left panel of Figure~\ref{fig:ICs}.
 
For each random initial condition, the initial boundary value problem \eqref{eq:Burgers} is solved with a finite element method, in which the spatial domain is discretized using piecewise linear finite elements with a uniform spatial mesh size $h=2^{-8}$, and the temporal discretization is performed by the implicit Euler method with a step size $\Delta t= 5\times 10^{-3}$.  The FOM solution is computed over the time window $[0, 2]$, leading to $N_t=401$ snapshots for each random initial condition. The number of spatial grid points is $N_x = 257$ for the chosen spatial resolution $h$. Thus, each FOM trajectory is stored in an $N_x\times {N_t}$ matrix, denoted by $Y^{(m)}$ for the $m$-th trajectory according to \eqref{eq:data}. A typical FOM solution is shown in the right panel of Figure~\ref{fig:ICs}.

\begin{figure}[htb]
\centering
    \includegraphics[width=0.85\textwidth]{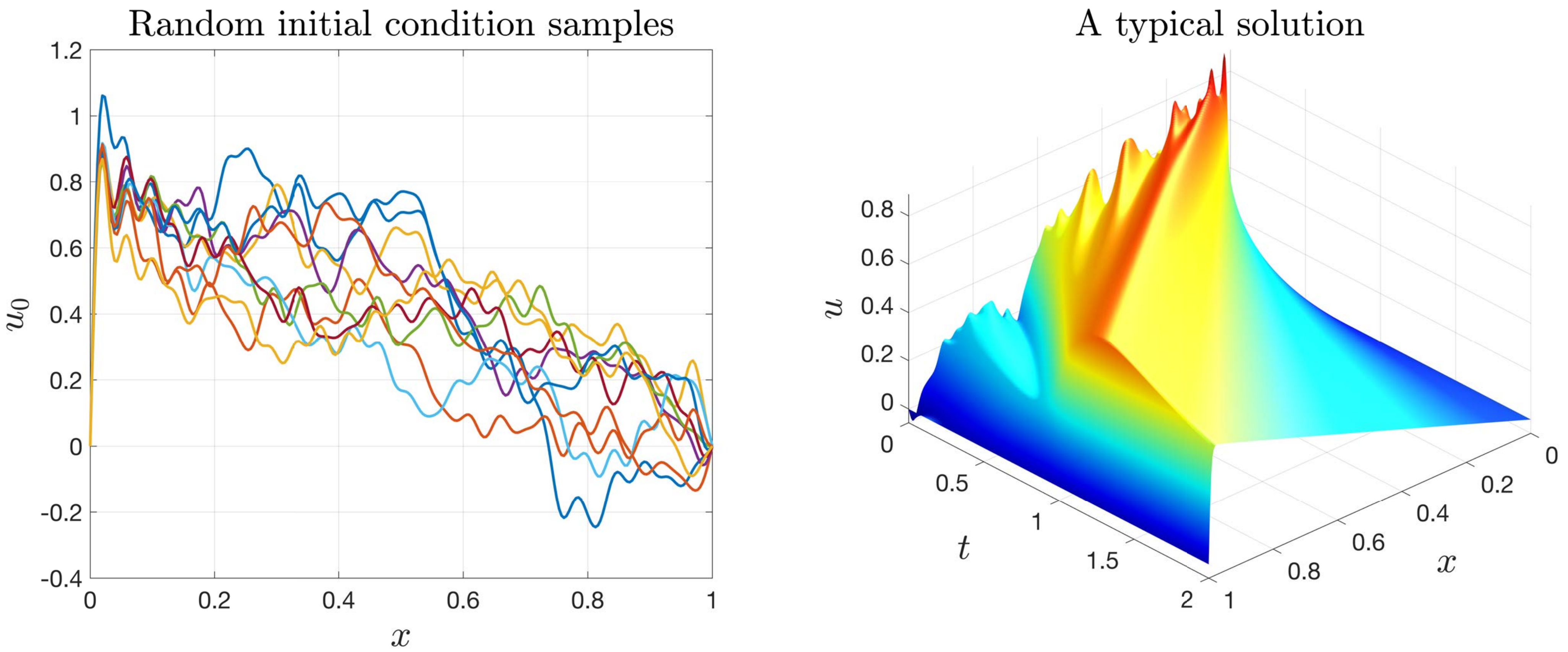}
    \caption{{\bf Left panel}: 10 random initial conditions sampled according to \eqref{eq:ic-random}. {\bf Right panel}: A solution of \eqref{eq:Burgers} over the time window $[0,2]$ with a random initial condition sampled from \eqref{eq:ic-random}.}
        \label{fig:ICs}
\end{figure}

We examine the construction of S-ROM of the form \eqref{2S-ROM-stoc} for the problem \eqref{eq:Burgers} in two groups of assessments. First, we investigate the convergence of the basis functions and the parameter estimator as the number of data trajectories increases. Second, we examine the performance of the S-ROM in two scenarios: trajectory-wise prediction and ensemble prediction. The trajectory-wise prediction aims to predict the deterministic dynamics, and the ensemble prediction aims to quantify the spread of the uncertainty from the unresolved modes. We will report the root mean square errors (RMSEs) of these predictions. In each simulation, we solve the estimated discrete-time S-ROM in \eqref{eq:sROM} exactly as it is:  we set $\Sigma=0$ when making trajectory-wise prediction, and we use the estimated $\Sigma$ with randomly sampled $\{\bxi_l\}$ when making ensemble prediction.

\subsection{POD Basis from multiple trajectories} \label{sec:POD_estimation}

We first check how the estimated POD modes and their corresponding eigenvalues stabilize as the number of training trajectories, $M$, increases. It turns out that the dominant POD modes estimated with a relatively small number of trajectories (e.g.~$M=20$) already capture qualitatively the shape of those obtained with significantly more trajectories. To facilitate a quantitative assessment, 
we denote the POD modes learnt from the first $M$ trajectories by $\{\varphi_j^M\}$, and we take those estimated with $\overline{M} = 1000$ as the reference. The numerics reveal that the first 10 POD modes, $\varphi_1^{\overline{M}}, \ldots, \varphi_{10}^{\overline{M}}$, already capture above $99.9\%$ of the averaged kinetic energy in each of the $1000$ training trajectories. We thus focus on these first 10 modes.   
\begin{figure}[htb]
\centering
   \includegraphics[width=0.95\textwidth]{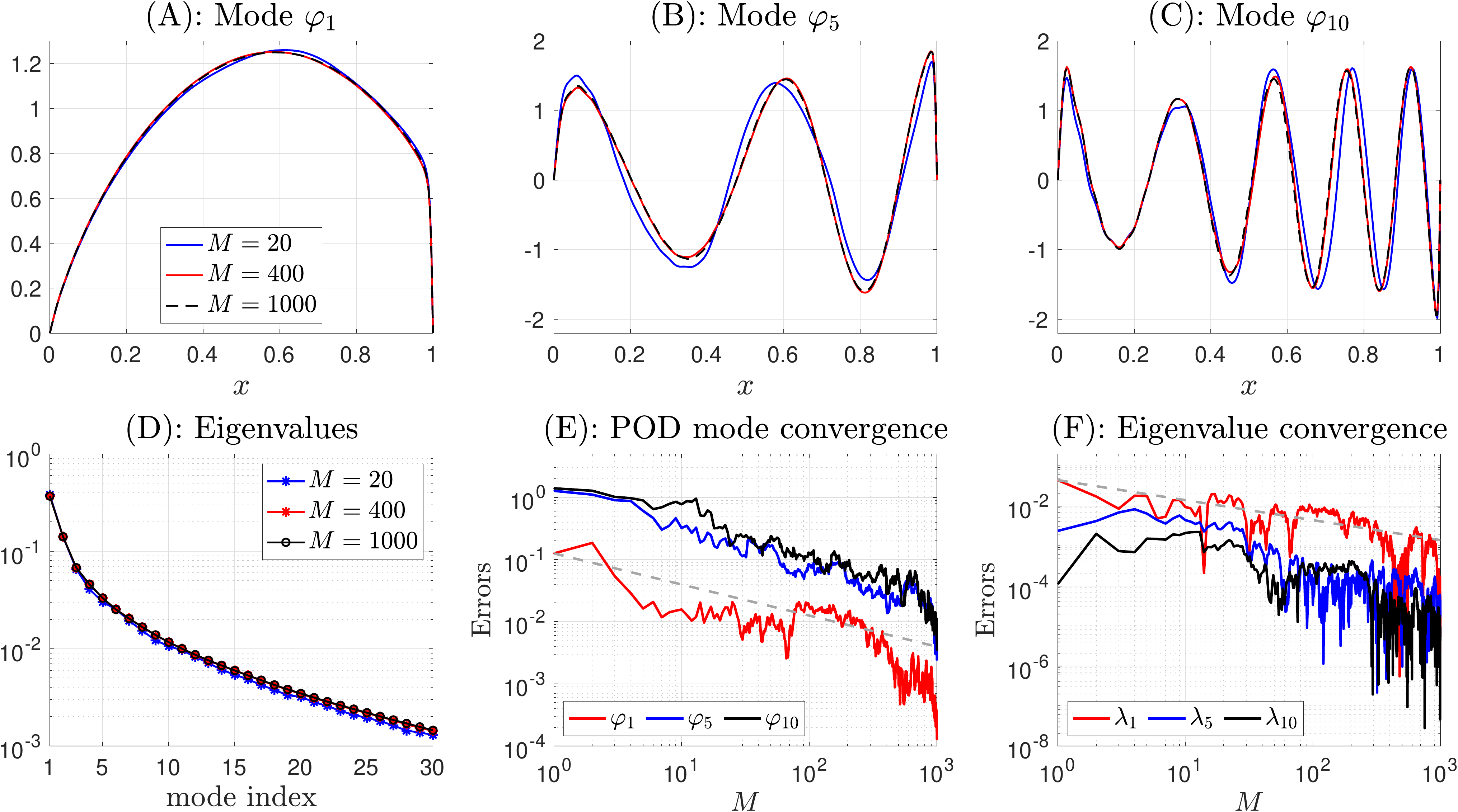}
    \caption{{\bf Panels (A-C)}: POD modes $\varphi_1$, $\varphi_5$ and $\varphi_{10}$, computed from the FOM solution datasets with $M=20$, $400$, and $1000$ trajectories, respectively. 
    {\bf Panels (D)}: The first 30 POD eigenvalues. 
    {\bf Panel (E)}:   $L^2$-error for the $j$-th POD mode, $\|\varphi^M_j - \varphi^{\overline{M}}_j\|_{L^2(0,1)}$, as the number of training trajectories $M$ increases towards $\overline{M}=1000$, shown for $j = 1, 5$, and $10$. {\bf Panel (F)}: Absolute error for the $j$-th POD eigenvalue, $|\lambda_j^M - \lambda_j^{\overline{M}}|$, as the number of training trajectories $M$ increases towards $\overline{M}=1000$, shown for $j = 1, 5$, and $10$. 
 The computed POD modes and their corresponding eigenvalues all show a clear trend of convergence as the number of training trajectories $M$ increases, at a rate close to the theoretical rate  $M^{-1/2}$ in Theorem \ref{thm:POD}, which is indicated by the dashed gray lines with slope $-0.5$. 
See Section \ref{sec:POD_estimation} for details.
    }
    \label{fig:pod}
\end{figure}

Panels (A)--(C) of Figure~\ref{fig:pod} show the estimated modes $\varphi_1,\varphi_{5}$, and $\varphi_{10}$ as $M$ increases. As can be observed, the estimated modes with $M=400$ are already almost indistinguishable with their respective reference ($\overline{M} = 1000$). The eigenvalues also stabilize quickly as shown in Panel (D) of Figure~\ref{fig:pod}. We also computed the $L^2$-error for each POD mode, $\|\varphi^M_j - \varphi^{\overline{M}}_j\|_{L^2(0,1)}$, as the number of training trajectories $M$ increases towards $\overline{M}$. The $L^2$-error for each mode follows a decreasing trend as $M$ increases with although local fluctuations; also, for each fixed $M$, the lower indexed modes have smaller errors. Panel (E) of Figure~\ref{fig:pod} show these $L^2$-errors for the modes $\varphi_1, \varphi_5$, and $\varphi_{10}$. 

A noteworthy feature of these $L^2$-errors is a power law decay with exponent $-0.5$ as marked by the dashed gray line in Panel (E) of Figure~\ref{fig:pod}. As shown in Theorem \ref{thm:POD}, such a power law decay with exponent $-0.5$ is just a manifestation of the Central Limit Theorem since the trajectories are independent and identically distributed with randomness from the initial condition. 
Also, in agreement with Theorem \ref{thm:POD}, this power law decay is also visible in the absolute error for the POD eigenvalues, $|\lambda_j^M - \lambda_j^{\overline{M}}|$, as shown in Panel (F) of Figure~\ref{fig:pod}, with an even more negative scaling exponent for some of the higher indexed modes.

\subsection{Convergence of parameter estimator}\label{sec:convEstNum}
We show next that the parameter estimator converge as the number of trajectory increases. Meanwhile, we show that the trajectory-wise estimator can vary largely between trajectories. Thus, it is important to estimate the parameters using multiple trajectories when constructing ROM for predictions from different initial conditions, particularly for random or stochastic systems.

\begin{figure}[htb]
\centering
    \includegraphics[width=1\textwidth]{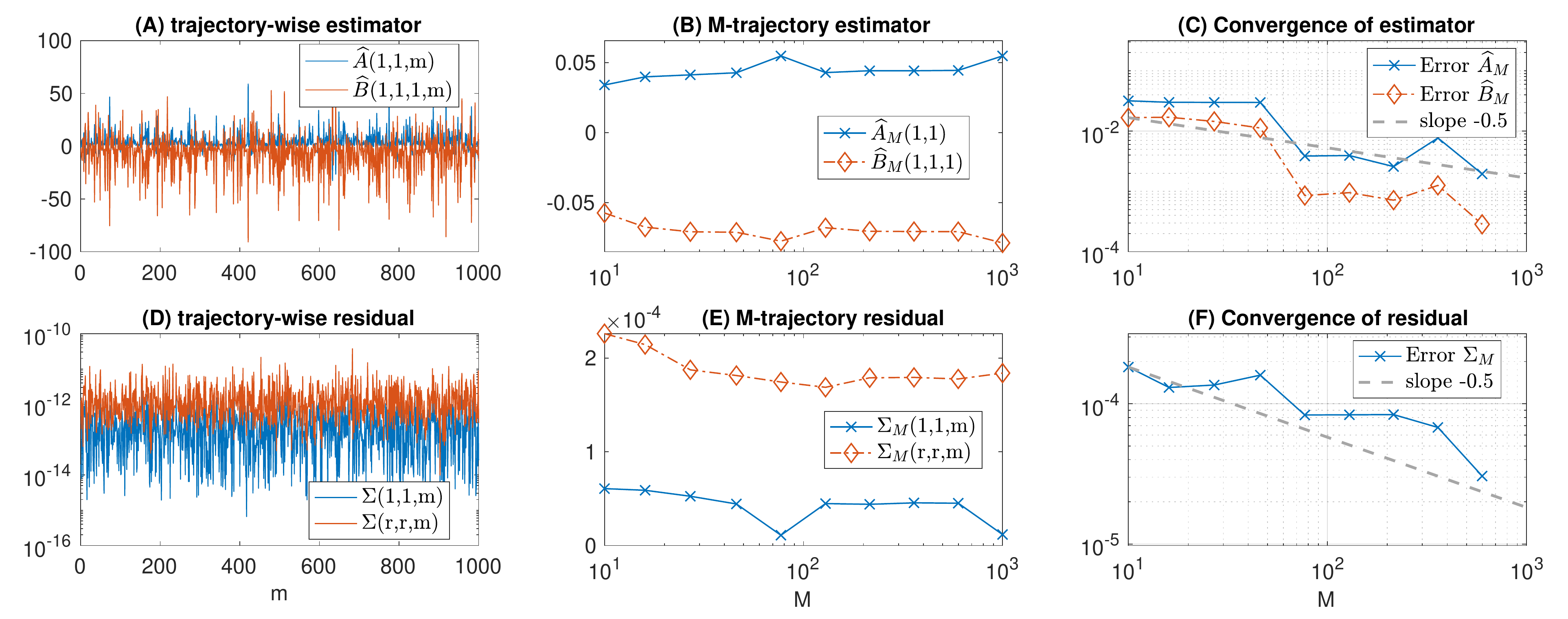}
    \caption{Convergence of parameter estimator. \textbf{Panel (A)} shows that the single-trajectory estimators vary largely between trajectories, and \textbf{Panel (D)} shows that the corresponding residuals are extremely small. Together, they indicate an overfitting by the single-trajectory ROM. On the contrary, \textbf{Panel (B) and (E) } show that the M-trajectory estimators stabilize quickly as $M$ increases. \textbf{Panel (C) and (F)} show the errors of the estimators in comparison to the reference estimator with $\overline{M} =1000$. The errors decay at a rate close to the theoretical rate $M^{-0.5}$ in Theorem \ref{thm:conv}.  See Section \ref{sec:convEstNum} for details. 
    } \label{fig:estConv}
\end{figure}

Figure \ref{fig:estConv} shows the estimators of the S-ROM with $r=10$ POD modes (those estimated from 1000 trajectories in the previous section) and with a time gap $\rm Gap =5$. Panel (A) shows that the trajectory-wise estimators of $\wA(1,1)$ and $\wB(1,1,1)$ vary largely between trajectories, and Panel (D) shows that the residuals of the trajectory-wise estimators are extremely small, indicating an overfitting. Together they show that while a trajectory-wise ROM can fit each specific trajectory data well with a negligible residual, it is sensitive to the data and is not generalizable. On the other hand, Panel (B) shows that the $M$-trajectory estimators of $\wA(1,1)$ and $\wB(1,1,1)$ stabilizes very quickly: as can be observed, the estimators with $M= 10$ are already close to those with $M=1000$ trajectories. Similar convergence trend is observed for the corresponding residuals in Panel (E). We remark that such a quick stabilization is partially due to that the POD modes are estimated from 1000 trajectories for all the estimators; if the POD modes are estimated for each sample size, which will oscillate as we have seen in Figure \ref{fig:pod}, we expect a sightly slower stabilization.    

Figure \ref{fig:estConv} (C) and (F) further show that the estimators converge at a rate close to $M^{-0.5}$, in agreement with Theorem \ref{thm:conv}. Here we compute the errors of the estimators in comparison to the reference estimators using $\overline{M}=1000$ trajectories. That is, for each of $M\in \{\floor{10^{1+j\alpha}}, j=0,\ldots,9\}$ with $\alpha = 2/9$,  the errors of $\widehat A_M $,  $\widehat B_M $ and  $\Sigma_M $ are computed by the Frobenius norms defined by 
\begin{equation*}
\begin{aligned}  
\| \widehat A_M - \widehat A_{\overline{M}}\|^2 & = \frac{1}{r^2}\sum_{1\leq i,k\leq r}|\widehat A_M (i,k) - \widehat A_{\overline{M}}(i,k)|^2, \\
\| \widehat B_M - \widehat B_{\overline{M}}\|^2 & = \frac{2}{r^2(r+1)}\sum_{1\leq i\leq i'\leq r,1\leq k\leq r}|\widehat B_M (i,i',k) - \widehat B_{\overline{M}}(i,i',k)|^2, \\
\| \Sigma_M - \Sigma_{\overline{M}}\|^2 & = \frac{1}{r}\sum_{1\leq k\leq r}|\Sigma_M (k,k) - \Sigma_{\overline{M}}(k,k)|^2 .
\end{aligned}
\end{equation*}
  
The parameters of S-ROM estimated from $\overline{M}$  trajectories are presented in Appendix \ref{append:paraEst}.

\subsection{S-ROM performance}\label{sec:SROM_performance}
We examine the performance of the S-ROM in two scenarios: deterministic single trajectory prediction and stochastic ensemble prediction. In the trajectory-wise prediction, the S-ROM makes a single trajectory prediction for each given initial condition by setting the stochastic force to be zero (i.e., $\Sigma=0$). We compare a typical solution field of S-ROM with those of the $r$-mode projection of the FOM solution and G-ROM (the Galerkin ROM with $r$-modes, i.e., $\dot \ba
= A \ba + \ba^\top B \ba$). We also report the statistics of the root mean square errors (RMSE) of the S-ROM to the $r$-mode projection of the FOM in multiple predictions. In the stochastic ensemble prediction, we turn on the stochastic force in the S-ROM and generate an ensemble of trajectories for each initial condition. 
The ensemble represents the spread of the uncertainty from the unresolved scales (i.e., the conditional distribution of the process), which is important for data assimilation \cite{chen2022conditional,LTC17}. 

The numerical settings are as follows. The S-ROM and G-ROM have $r=10$ POD modes (since it captures $99.9\%$ of the averaged kinetic energy of almost each trajectory, see Section \ref{sec:POD_estimation}) and a time step $0.025$ (i.e., with time gap $\rm =5$). The S-ROM uses the parameters estimated from Section \ref{sec:convEstNum}. The solutions are on the time interval $[0,4]$, twice the length of the training time interval. Their initial condition is the $r$-mode projection of the FOM's initial condition.

\begin{figure}[htb]
\centering
   \includegraphics[width=0.98\textwidth]{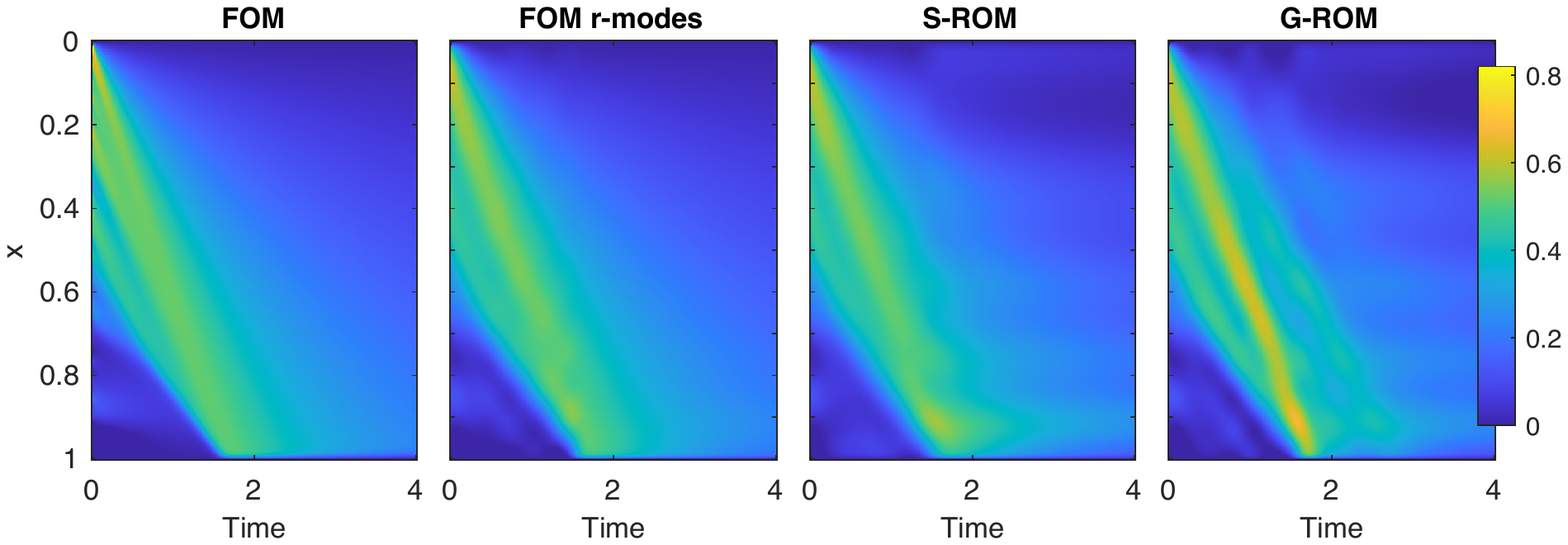}
    \includegraphics[width=0.98\textwidth]{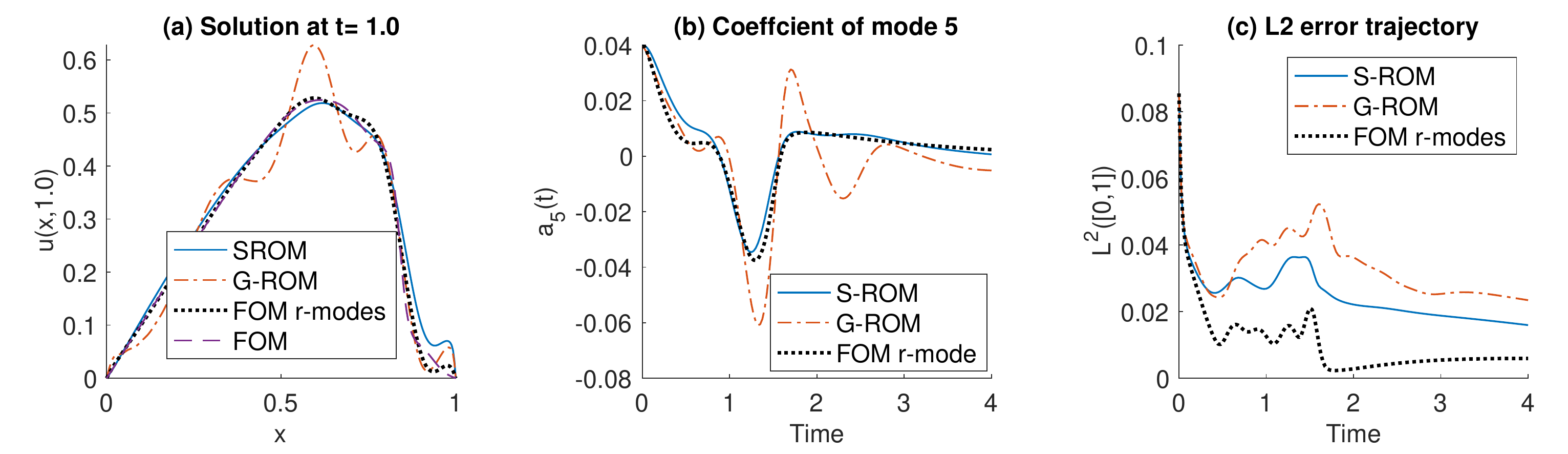} \vspace{-3mm}
    \caption{A typical solution trajectory of the S-ROM, in comparison with the FOM, the $r$-mode projection of the FOM, the G-ROM solutions. The \textbf{top row} compares the four solution fields. The 
\textbf{bottom row} compares the solutions at time $t=1.0$ in \textbf{(a)}, the trajectory of $\ba_5(t)$ in \textbf{(b)} and the trajectory of $L^2(0,1)$ errors, in comparison to the FOM solution in \textbf{(c)}. As can be seen, the S-ROM solution is more accurate than the G-ROM in approximating the FOM. 
     }
    \label{fig:1traj}
\end{figure}

\paragraph{Deterministic single trajectory prediction}
Figure \ref{fig:1traj} compares a typical solution field of S-ROM with those of the FOM, the $r$-mode projection of FOM (denoted by ``FOM $r$-modes''), and the G-ROM. It shows that the S-ROM solution is more accurate than the G-ROM in approximating the $r$-mode projection of the FOM. More specifically, the top row shows that the S-ROM has a solution field significantly closer to the $r$-mode projection of the FOM than the G-ROM's. 
The bottom row provides detailed profiles of the solution field: the spatial profile at time $t=1.0$ in \textbf{(a)} and the trajectory of  $\ba_5(t)$ in \textbf{(b)}. The spatial curves of $u(x,1.0)$ in  \textbf{(a)} shows that the S-ROM is significantly closer to the FOM's $r$-mode projection than the G-ROM. Similar superior performance is observed in \textbf{(b)}, which presents the trajectory of $\ba_5(t)$. Furthermore, \textbf{(c)} shows that S-ROM has $L^2(0,1)$ errors smaller than those of the G-ROM. Here the $L^2(0,1)$ error is computed as $\|\widehat u_r(\cdot,t) - u(\cdot,t)\|^2_{L^2(0,1)}$, with $\widehat u_r$ coming from the S-ROM, the G-ROM and the $r$-mode projection of the FOM. We note that the $r$-mode projection of the FOM has non-negligible errors because of the missing higher modes.

\begin{figure}[H]
\centering
   \includegraphics[width=0.98\textwidth]{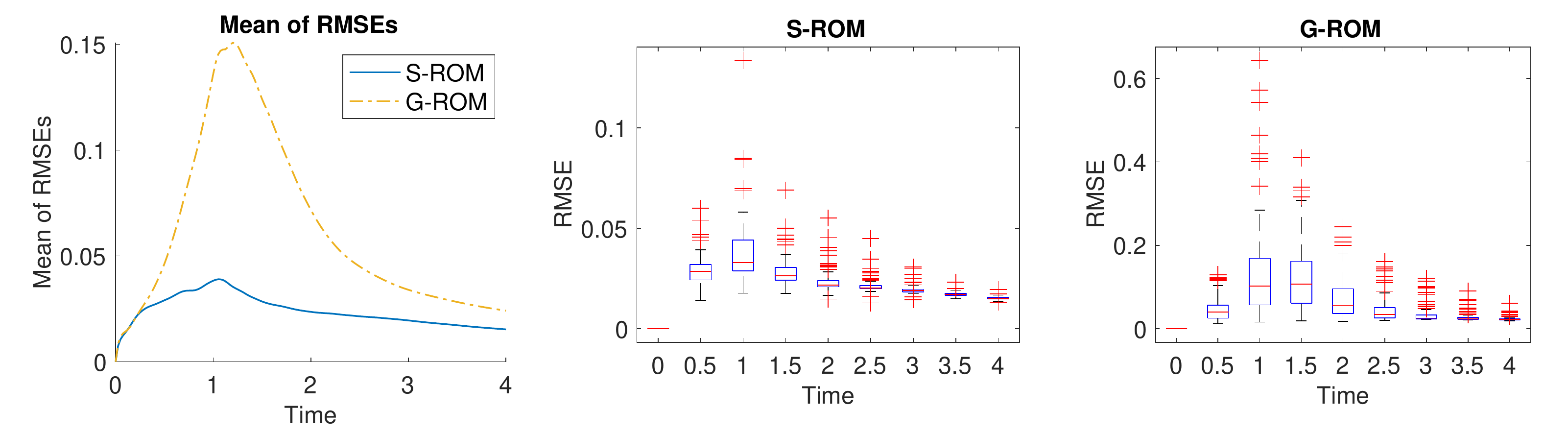}
    \caption{ RMSEs of 100 prediction trajectories by the S-ROM and G-ROM. \textbf{Left:} the mean of the RMSEs;  \textbf{Middle and Right:} the box-plots of the RMSEs in the time interval $[0,4]$. For each box, the central mark indicates the median, and the bottom and top edges of the box indicate the $25$-th and $75$-th percentiles, respectively. The whiskers extend to the most extreme data points without outliers, and the outliers are plotted individually using the ``$+$'' marker symbol. The S-ROM has RMSEs significantly smaller than those of the G-ROM. }
    \label{fig:boxplots}
\end{figure}

We further examine the superior performance of S-ROM over G-ROM in the prediction with 100 new initial conditions, and report the statistics of the RMSEs of the trajectories in Figure \ref{fig:boxplots}. The RMSEs are computed as 
\begin{equation}\label{eq:rmse}
{\rm RMSE(t) } = \| \widehat \ba(t) - \ba(t) \| = (\sum_{j=1}^r | \widehat \ba_j(t) -  \ba_j(t) |^2)^{1/2},
\end{equation}
for $t\in [0,4]$ for each trajectory, where $ \widehat \ba(t)$ comes from the S-ROM or G-ROM, and $\ba(t)$ is the $r$-mode projection of the FOM. These plots clearly show the improvement brought by the S-ROM: (1) the S-ROM's median RMSEs are less than 0.04, while those of G-ROM are about 0.15, about three times larger. Similar improvements are observed for the $25$-th and the $75$-th percentiles. (2) the outliers of S-ROM are less than 0.15, while those of the G-ROM exceeds 0.6. Thus, the S-ROM is significantly more accurate than the G-ROM in approximating the $r$-mode projection of the FOM.

\begin{figure}[htb]
\centering
   \includegraphics[width=0.8\textwidth]{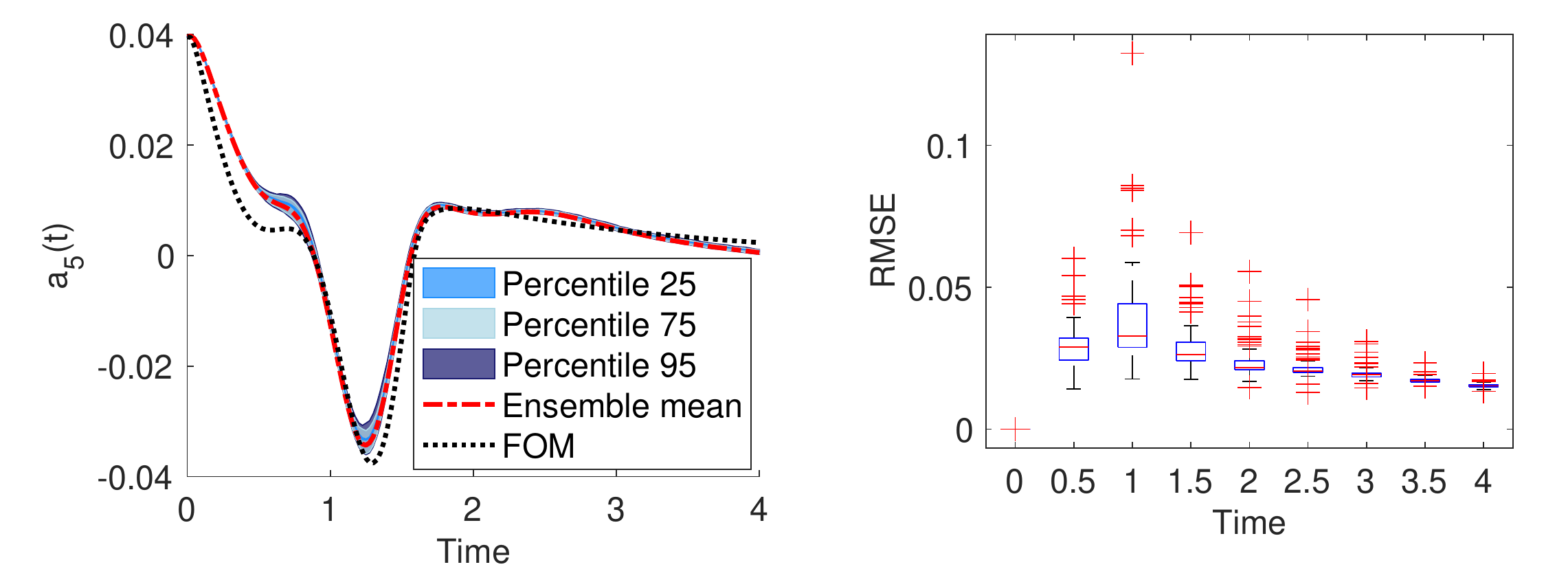}
    \caption{ Stochastic ensemble prediction. \textbf{Left:} An ensemble of 100 random trajectories of the S-ROM from an initial condition, as well as its ensemble percentiles (25,75 and 95) and mean (in red color), in comparison with the FOM's trajectory. \textbf{Right:} the box-plot of the RMSEs of the ensemble mean (as shown in left) in 100 independent simulations.  }
    \label{fig:stocEns}
\end{figure}

\paragraph{Stochastic ensemble prediction.} 
Next, we investigate the stochastic ensemble prediction by the S-ROM from an initial condition when the stochastic force is turned on. This is the setting when we have only the partial observation of the $r$-model of the FOM, and we would like to predict the future dynamics without resolving the FOM. In this setting, the S-ROM ensemble is the output corresponding to the realizations of the stochastic force. The ensemble represents the spread of the uncertainty from the unresolved scales, which is important for data assimilation \cite{chen2022conditional,LTC17}.  The left panel in Figure \ref{fig:stocEns} shows the ensemble with 100 trajectories of $\ba_t(t)$ (in cyan color) from a fixed initial condition, along with the ensemble mean and the trajectory of the FOM. It can be seen that the ensemble spreads out in the time interval $[0,1]$ and the spread is the widest when the trajectory changes convexity, which is due to nonlinear interaction between modes; and the spread becomes narrow after $t=2$ when the shock is almost formulated (see Figure \ref{fig:1traj} top row and Figure \ref{fig:ICs} right panel). The right panel of Figure \ref{fig:stocEns} further shows the statistics of the RMSEs of the ensemble mean in 100 independent such simulations (that is, in each simulation, we generate an ensemble of 100 prediction trajectories, obtain the ensemble mean, and then compute the RMSE of the ensemble mean as in \eqref{eq:rmse}). The statistics of the RMSEs are similar to those of deterministic prediction Figure \ref{fig:boxplots} (middle), reflecting the fact that the noise strength parameter $\Sigma$ is very small (see Figure \ref{fig:estConv}) compared with other parts of the vector field in the ROM.

\subsection{Discussion on space-time reduction}\label{sec:spaceTimeReduction}
The most attractive feature of the S-ROM is that it is adaptive to the resolution $r$ and time step size $\delta$. In the previous section, we focused on demonstrating the performance of S-ROM with $r=10$ and $\delta=0.025$ (i.e., time gap $=5$). A natural question is that when $(r,\delta)$ changes, how will the S-ROM adapt. In particular, for a given resolution $r$, what is the maximal time step size that the S-ROM can remain stable and what is the optimal time step size such that the S-ROM makes the most accurate predictions. 

\begin{figure}[htb]
\centering
   \includegraphics[width=0.5\textwidth]{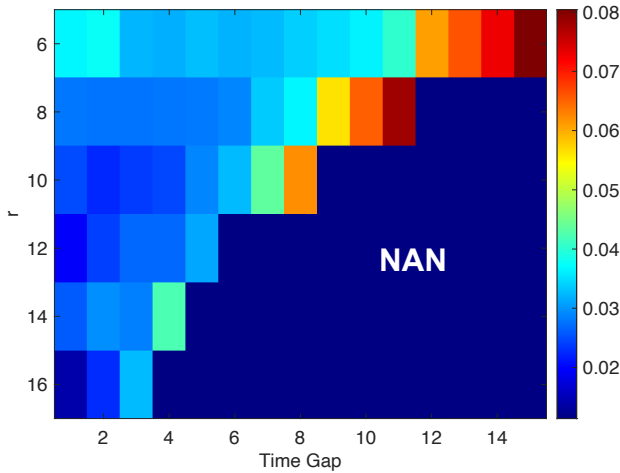}
    \caption{Average RMSEs of trajectory prediction from 90 S-ROMs with resolutions $r\in \{6,8,\ldots,16\}$ and time step sizes $\delta = {\rm Gap}\times 0.005$ with ${\rm Gap}\in \{1,\ldots,15\}$. 
    For each pair $(r,{\rm Gap})$, an S-ROM is estimated from data consisting of 1000 trajectories on time interval $[0,2]$, and it is used to make predictions on the time interval $[0,4]$ from 200 random initial conditions; the average RMSE is the time-average of the RMSEs of these 200 predicted trajectories. The darkest blue color with the ``NAN'' marker indicates that the S-ROM has at least one blowup solution among the 200 predictions. As $r$ increases, the maximal time step size of stability decreases, from ${\rm Gap}=11$ for $r=8$ to ${\rm Gap} = 3$ for $r=16$, due to the increasing stiffness. Note that for $r\in \{6,8,10\}$, the average RMSE first decreases then increases as the time gap increases, thus, suggesting that the S-ROM makes the most accurate prediction when the time step size is medium large. 
 }
    \label{fig:spacetime}
\end{figure}
To answer these questions, we test 90 S-ROMs with $r\in \{6,8,10,12,14,16\} $ and with $\delta = {\rm Gap}\times 0.005$ with ${\rm Gap} \in \{1,\ldots,15\}$.
 All the S-ROMs are trained from the dataset consisting of 1000 data trajectories, downsampled with time gap from the FOM solution with time step size $\Delta t= 0.005$. 
 For each $(r,\delta)$, we estimate an S-ROM, compute the mean RMSEs of 200 deterministic predictions by the estimated S-ROM, and report the time-average of the mean RMSE trajectory in Figure \ref{fig:spacetime}. Here the darkest blue color with the ``NAN'' marker indicates that the S-ROM has at least one blowup solution among 200 prediction trajectories.  As $r$ increases, the stiffness of the S-ROM increases, and its maximal time step size of stability decreases, from ${\rm Gap}=11$ for $r=8$ to ${\rm Gap} = 3$ for $r=16$. When the resolution is $r=6$, the S-ROMs with all the time gaps are stable, but the average RMSE first decreases then increases as the time gap increases. Such a pattern is also observed for the S-ROMs with $r=8$ and $r=10$. This pattern suggests that the S-ROM makes the most accurate prediction when the time step size is medium large. 
 
In summary, the S-ROMs are adaptive to resolution and time-step size. Its performance is best when there is a balanced space-time reduction. It remains open to understand the mechanism of such a balanced space-time reduction, and its connection with numerical error and statistical learning.


 \section{Conclusions} \label{sec:conclusion}
 
 We have proposed and investigated a parametric inference approach to construct data-driven stochastic reduced order models (S-ROM) that combines physical insights with data. The S-ROM is trained from multiple-trajectory data with random initial conditions and it is robust to make accurate predictions from new initial conditions. The framework consists of two stages. First, it constructs basis function from data by POD. Then, it infers a closure model for the coefficients of the
 first POD modes from the projected data trajectories. The current study focuses on systems with quadratic nonlinearities and constructs closure terms with linear and quadratic terms. We prove that the estimated POD basis and the parameters in the S-ROM converge at a rate $M^{-1/2}$, with $M$ being the number of data trajectories.  

We demonstrate the performance of the S-ROM on a viscous Burgers equation with random initial conditions. Numerical results verify the convergence of the POD basis and the parameters. They also confirm the superior performance of the S-ROM in making accurate predictions and representing the spread of the uncertainty due to the unresolved scales. Furthermore, the S-ROM is adaptive to the spatial resolution and time-step size. For a given spatial resolution, we 
demonstrate that the S-ROM makes the most accurate prediction when the time step size is medium large. 

We conclude by reiterating some key features of the novel S-ROM framework 
and the issues to be further investigated in future work. The framework of constructing S-ROM via  discrete-time flow map approximation has four major advantages:   \vspace{-2mm}
\begin{enumerate}[leftmargin=*]\setlength\itemsep{-1mm}
\item It is applicable to general data-driven models for high-dimensional dynamical systems, either random or stochastic. In particular, when there are physical insights to provide parameter forms, the ROM can be efficiently trained by regression with proper regularization to achieve performance guarantee.  
\item The reduction is in both space and time. Thus, the S-ROM achieves efficient simulation that can be orders of magnitudes faster than the FOM. 
\item The training can be done using only short trajectories for autonomous systems, and the constructed S-ROM can make predictions far beyond the training time interval, because the S-ROM learns a flow map that does not vary in time. Furthermore, the S-ROM can make prediction for new initial conditions sampled from the initial distribution. 
\item With a small noise term, the S-ROM captures the short-term dynamics of the resolved modes by its deterministic parts, and
can represent the spread of the uncertainty due to unresolved scales. Thus, the S-ROM is well-suited for predictive modeling that requires uncertainty quantification, e.g., data assimilation. 
\end{enumerate}

There are also a few aspects to be further investigated:
\vspace{-2mm}
\begin{enumerate}[leftmargin=*]\setlength\itemsep{-1mm}
\item The constructed S-ROM depends on the initial distribution through the training data. The reason is that, although the flow map of the FOM (the solution operator) is independent of the initial condition,
the flow map of the ROM variables depends on the initial condition of the unresolved variables. 
Thus, when the initial distribution changes, the S-ROM must be trained again. It is of interest to understand how the S-ROM parameters depend on the initial distribution. 
\item  It remains open to understand the mechanism of the balanced space-time reduction that achieves the most accurate prediction. This question is in the same spirit as the bias-variance tradeoff in statistical learning theory \cite{CuckerSamle02}. 
\end{enumerate}


\appendix
\section{Regularization}\label{sec:Reg}
Regularization plays an important role in the estimation of the parameter in S-ROM, because the normal matrix in the regression is often ill-conditioned or even singular \cite{iliescu2018regularized}. 
In our numerical tests, the normal matrix $\bA_M$ in \eqref{eq:LSE} is nonsingular but mostly ill-conditioned, with conditional numbers mostly in the range from $10^6$ to $10^{12}$ when $M>10$. Thus, the parameter estimation by solving the linear equation $\bA_M \bc = \mathbf{b}_M$ in \eqref{eq:LSE} is an ill-posed inverse problem. Then, regularization is necessary to avoid the numerical and model errors in $\mathbf{b}_M$ being overly amplified. We regularize the problem using the Euclidean norm. That is, we minimize the regularized loss function 
\[
\mE_\lambda(\bc) = \mE(\bc) + \lambda \|\bc\|^2, \quad \text{ with } \mE(\bc) = \bc^\top \bA_M \bc - 2\mathbf{b}_M^\top \bc 
\]
where $\|\cdot\|$ is the Euclidean norm, and the $ \mE(\bc) $ comes from the loss function in \eqref{eq:MLE}. We select the optimal regularization parameter $\lambda$ by the L-curve method (see \cite{hansen_LcurveIts_a}, and we refer to \cite{gazzola2019ir,LLA22} for recent developments) as follows. First, we solve $c_\lambda= (\bA_M+\lambda I)^{-1} \mathbf{b}_M$ by the minimum norm least square solution. Then, we find the regularization parameter that maximizes the curvature of the curve:
\[ l(\lambda) = (x(\lambda), y(\lambda)) := (\log(\mE(\bc_\lambda), \log(\|\bc_\lambda\|)).\]
 Recall that the curvature of $l$ is $\kappa(\lambda)= \frac{x'y'' - x' y''}{(x'\,^2 + y'\,^2)^{3/2}}$. Thus, we compute the curvature by central difference approximation of these derivatives with a mesh for $\lambda$ between the minimal and maximal eigenvalues of $\bA_M$, and we select the $\lambda$ with the maximal curvature.

 \section{Parameters in the S-ROM model}\label{append:paraEst}
Table \ref{tab:Atilde} and Figure \ref{fig:Btilde} show the estimated $\wA$ and $\wB$ of S-ROM estimated from $\overline{M}=1000$ trajectories. This S-ROM has $r=10$ POD modes, and its time step is $\delta = 0.025$ (i.e., its time gap is $\rm Gap =5$). We present the parameter  $\wA \in \R^{r\times r}$ in Table \ref{tab:Atilde} and present $\wB\in \R^{r\times r\times r}$ in scaled images in Figure \ref{fig:Btilde}.  

\begin{table}[H]\vspace{-3mm}
\centering
\caption{Parameter $\wA$ in S-ROM with $r=10$ and $\delta = 0.025$, estimated from $\overline{M}=1000$ trajectories.}\label{tab:Atilde}
\begin{tabular}{c | cccc cccc cc }
 $j$ & 1 & 2 & 3& 4 & 5 &6 &7 & 8  & 9 & 10 \\
\hline
$\wA(1,j)$& 0.05  & -0.03  & -0.05  & -0.11  & 0.14  & -0.13  & 0.15  & -0.16  & 0.17  & -0.18  \\ 
$\wA(2,j)$& -0.08  & 0.01  & 0.08  & 0.11  & -0.21  & 0.17  & -0.25  & 0.25  & -0.30  & 0.25  \\ 
$\wA(3,j)$& -0.10  & 0.00  & -0.02  & 0.05  & -0.18  & 0.22  & -0.28  & 0.31  & -0.40  & 0.34  \\ 
$\wA(4,j)$& -0.06  & 0.00  & 0.02  & -0.15  & -0.05  & 0.19  & -0.09  & 0.26  & -0.30  & 0.36  \\ 
$\wA(5,j)$& 0.07  & -0.00  & -0.01  & -0.06  & -0.30  & 0.03  & 0.29  & -0.14  & 0.36  & -0.10  \\ 
$\wA(6,j)$& -0.04  & 0.00  & 0.00  & 0.08  & 0.01  & -0.59  & 0.19  & 0.31  & -0.02  & 0.36  \\ 
$\wA(7,j)$& 0.07  & -0.00  & -0.00  & -0.04  & 0.22  & 0.02  & -0.98  & 0.32  & 0.45  & -0.03  \\ 
$\wA(8,j)$& -0.06  & 0.00  & 0.00  & 0.07  & -0.07  & 0.30  & 0.18  & -1.57  & 0.65  & 0.80  \\ 
$\wA(9,j)$& 0.07  & 0.00  & -0.00  & -0.01  & 0.15  & 0.03  & 0.45  & 0.47  & -2.50  & 1.75  \\ 
$\wA(10,j)$& -0.06  & 0.00  & 0.00  & 0.04  & -0.03  & 0.30  & -0.03  & 0.80  & 1.24  & -3.39  \\ 
   \end{tabular}
   \end{table}
\vspace{-3mm}

   \begin{figure}[H] \vspace{-3mm}
\centering
    \includegraphics[width=1\textwidth]{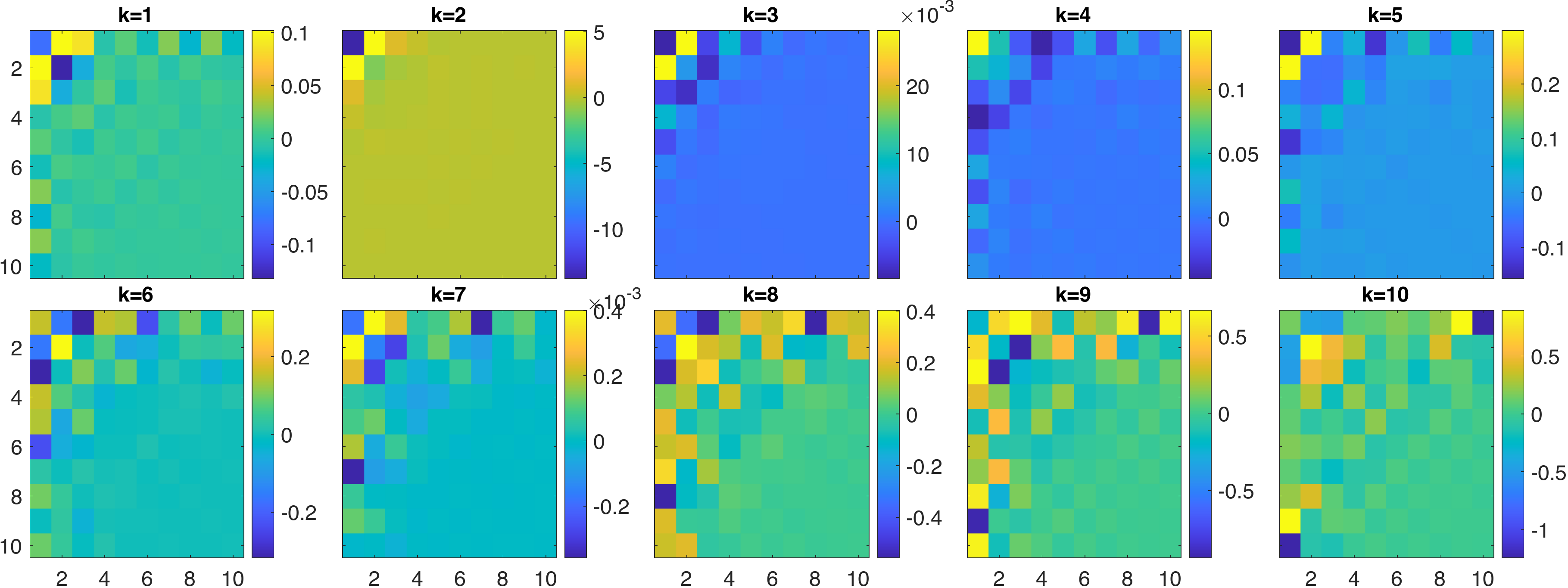}
\caption{Parameter $\wB$ in S-ROM with $r=10$ and $\delta = 0.025$, estimated from $\overline{M}=1000$ trajectories.}\label{fig:Btilde}
   \end{figure}

\paragraph{Acknowledgement} The work of F.L. is partially funded by NSF DMS-1913243. The work of H.L. is partially funded by NSF Award DMS-2108856.
The work of T.I. is partially funded by NSF Awards DMS-2012253 and CDS\&E-MSS-1953113. The authors would like to thank Prof. Charbel Farhat for helpful comments. 

{\small

}
\end{document}